\numberwithin{figure}{section}
\theoremstyle{plain}
\newtheorem{thm}{Theorem}[section]
\newtheorem*{prop*}{Proposition}
\newtheorem*{thm*}{Theorem}
\newtheorem{prop}[thm]{Proposition}
\newtheorem{lem}[thm]{Lemma}
\newtheorem{cor}[thm]{Corollary}
\theoremstyle{definition}
\newtheorem{dfn}[thm]{Definition}
\newtheorem*{dfn*}{Definition}
\theoremstyle{remark}
\newtheorem{rmk}[thm]{Remark}
\newtheoremstyle{maintheorem}{}{}{\itshape}{}{\bfseries}{}{.5em}{#1 \!\thmnote{#3}.}
\theoremstyle{maintheorem}
\newtheorem*{mainthm}{Theorem}
\newcommand{\Aut}{\operatorname{Aut}}
\newcommand{\SAut}{\operatorname{SAut}}
\newcommand{\Out}{\operatorname{Out}}
\newcommand{\SOut}{\operatorname{SOut}}
\newcommand{\GL}{\operatorname{GL}}
\newcommand{\SL}{\operatorname{SL}}
\newcommand{\PGL}{\operatorname{PGL}}
\newcommand{\PSL}{\operatorname{PSL}}
\newcommand{\st}{\operatorname{st}}
\newcommand{\lk}{\operatorname{lk}}
\newcommand{\Alt}{\operatorname{Alt}\nolimits}
\newcommand{\Sym}{\operatorname{Sym}\nolimits}
\def\C{\mathbb{C}}
\def\R{\mathbb{R}}
\def\Z{\mathbb{Z}}
\def\K{\mathbb{K}}
\def\s-{\smallsetminus}
\def\into{\hookrightarrow}
\def\iff{if and only if }
\newcounter{dawidcomments}
\author{Dawid Kielak}
\title[Outer actions of $\Out(F_n)$ on small RAAGs]{Outer actions of $\Out(F_n)$ on small right-angled Artin groups}
\date{\today}
\begin{document}

\begin{abstract}
 We determine the precise conditions under which $\SOut(F_n)$, the unique index two subgroup of $\Out(F_n)$, can act non-trivially via outer automorphisms on a RAAG whose defining graph has fewer than $\frac 1 2 \binom n 2 $ vertices.

We also show that the outer automorphism group of a RAAG cannot act faithfully via outer automorphisms on a RAAG with a strictly smaller (in number of vertices) defining graph.

Along the way we determine the minimal dimensions of non-trivial linear representations of congruence quotients of the integral special linear groups over algebraically closed fields of characteristic zero, and provide a new lower bound on the cardinality of a set on which $\SOut(F_n)$ can act non-trivially.
\end{abstract}

\maketitle

\section{Introduction}

The main purpose of this article is to study the ways in which $\Out(F_n)$ can act via outer automorphisms on a right-angled Artin group $A_\Gamma$ with defining graph $\Gamma$. (Recall that $A_\Gamma$ is given by a presentation with generators being the vertices of $\Gamma$, and relators being commutators of vertices which span an edge in $\Gamma$.)  
Such actions have previously been studied for the extremal cases: when the graph $\Gamma$ is discrete, we have $\Out(A_\Gamma) = \Out(F_m)$ for some $m$, and homomorphisms
\[ \Out(F_n) \to \Out(F_m) \]
have been investigated by Bogopolski--Puga~\cite{BogopolskiPuga2002}, Khramtsov~\cite{Khramtsov1990}, Bridson--Vogtmann~\cite{BridsonVogtmann2012}, and the author~\cite{Kielak2013, Kielak2015a}. When the graph $\Gamma$ is complete, we have $\Out(A_\Gamma) = \GL_m(\Z)$, and homomorphisms
\[\Out(F_n) \to \GL_m(\Z) \]
or more general representation theory of $\Out(F_n)$
have been studied by Grunewald--Lubotzky~\cite{GrunewaldLubotzky2006}, Potapchik--Rapinchuk~\cite{PotapchikRapinchuk2000}, Turchin--Wilwacher~\cite{TurchinWillwacher2015}, and the author~\cite{Kielak2013, Kielak2015a}.


\smallskip
There are two natural ways of constructing non-trivial homomorphisms
\[\phi \colon \Out(F_n) \to \Out(A_\Gamma)\]
When $\Gamma$ is a join of two graphs, $\Delta$ and $\Sigma$ say, then $\Out(A_\Gamma)$ contains 
\[\Out(A_\Delta) \times \Out(A_\Sigma)\]
 as a finite index subgroup. When additionally $\Delta$ is isomorphic to the discrete graph with $n$ vertices, then $\Out(A_\Delta) = \Out(F_n)$, and so we have an  obvious embedding $\phi$.

In fact this method works also for a discrete $\Delta$ with a very large number of vertices, since there are injective maps $\Out(F_n) \to \Out(F_m)$ constructed by Bridson--Vogtmann~\cite{BridsonVogtmann2012} for specific values of $m$ growing exponentially with $n$.

\smallskip
The other way of constructing non-trivial homomorphisms $\phi$ becomes possible when $\Gamma$ contains $n$ vertices with identical stars. In this case it is immediate that these vertices form a clique $\Theta$, and we have a map
\[ \GL_n(\Z) = \Aut(A_\Theta) \to \Aut(A_\Gamma) \to \Out(A_\Gamma) \]
We also have the projection
\[ \Out(F_n) \to \Out(H_1(F_n)) = \GL_n(\Z) \]
and combining these two maps gives us a non-trivial (though also non-injective) $\phi$.

This second method does not work in other situations, due to the following result of Wade.
\begin{thm}[{\cite{Wade2013}}]
Let $n \geqslant 3$.
Every homomorphism
\[\SL_n(\Z) \to \Out(A_\Gamma)\]
 has finite image \iff $\Gamma$ does not contain $n$ distinct vertices with equal stars.
\end{thm}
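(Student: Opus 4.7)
The easy direction is the construction described just before the theorem: $n$ vertices with equal stars span a clique $\Theta$, and the natural inclusion $\Aut(A_\Theta)=\GL_n(\Z)\hookrightarrow\Aut(A_\Gamma)$, extended by the identity off $\Theta\cup\lk(\Theta)$, descends to a map $\GL_n(\Z)\to\Out(A_\Gamma)$ whose restriction to $\SL_n(\Z)$ manifestly has infinite image.

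For the hard direction, suppose $\Gamma$ contains no $n$ distinct vertices with equal stars and let $\phi\colon\SL_n(\Z)\to\Out(A_\Gamma)$ be a homomorphism. My plan is to analyse the images of the standard elementary matrices $E_{ij}\in\SL_n(\Z)$ by exploiting their Steinberg commutator relations $[E_{ij},E_{jk}]=E_{ik}$ and $[E_{ij},E_{kl}]=1$ for $\{i,j\}\cap\{k,l\}=\emptyset$. Since $\SL_n(\Z)$ has property (T) and is perfect up to finite index for $n\geqslant 3$, establishing finiteness of $\im\phi$ reduces to showing that some power $\phi(E_{ij}^k)$ must be torsion. Assuming the contrary, the Laurence--Servatius generating set for $\Aut(A_\Gamma)$ together with the normal-form and classification results of Charney--Vogtmann and Day should identify an infinite-order element of $\Out(A_\Gamma)$ participating in Heisenberg-like relations $[\alpha,\beta]=\gamma$ with $[\alpha,\gamma]=1$ as, essentially, a transvection $\tau_{v,w}$, which exists in $\Aut(A_\Gamma)$ precisely when $\lk(w)\subseteq\st(v)$. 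Thus each $\phi(E_{ij}^k)$ should equal $\tau_{v_i,v_j}$ for some $n$-tuple of vertices $v_1,\dots,v_n$ of $\Gamma$.

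The Steinberg commutator relations then translate into link-inclusions $\lk(v_j)\subseteq\st(v_i)$ for all $i\neq j$, and a short combinatorial argument forces the equality $\st(v_1)=\cdots=\st(v_n)$, contradicting the hypothesis. The main obstacle I anticipate is the identification step in which $\phi(E_{ij}^k)$ is recognised as a transvection: ruling out other infinite-order elements of $\Out(A_\Gamma)$ (partial conjugations, products of transvections along several vertices, and so on) exhibiting the same commutator footprint requires the deepest input from the structure theory of $\Out(A_\Gamma)$. An alternative route via the homology representation $\Out(A_\Gamma)\to\GL_V(\Z)$ combined with the non-existence of non-trivial integral representations of $\SL_n(\Z)$ in dimension less than $n$ would reduce the problem to analysing the $\Torelli$ subgroup of $\Out(A_\Gamma)$, but this faces the analogous difficulty that $\Torelli$-subgroups of general RAAGs are less well understood than $\Torelli_n\subset\Out(F_n)$.
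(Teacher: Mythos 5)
The paper does not prove this theorem: it is quoted verbatim from Wade's paper \cite{Wade2013}, and the author immediately remarks that Wade's result is in fact far more general, applying to any irreducible lattice of real rank $n-1$ in a semisimple Lie group with finite centre and no compact factors. So there is no in-paper proof to compare against, and Wade's argument in the cited reference rests on rigidity-theoretic machinery (analysis of the homology representation and its kernel, combined with superrigidity-type input for higher-rank lattices) rather than on any direct identification of images of elementary matrices. Your proposal is thus a genuinely different route, not a reconstruction of Wade's.

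That said, your hard direction has a gap that is not a technicality but the entire content of the theorem. You plan to deduce that (a power of) $\phi(E_{ij})$ is ``essentially a transvection $\tau_{v_i,v_j}$'' from the fact that its image participates in Heisenberg-type commutator relations; you flag this identification as the ``main obstacle'', but nothing you say suggests how to overcome it. Infinite-order elements of $\Out(A_\Gamma)$ satisfying $[\alpha,\beta]=\gamma$, $[\alpha,\gamma]=[\beta,\gamma]=1$ need not be single transvections: products of commuting transvections at distinct vertices, partial conjugations, and elements of the Torelli subgroup can all produce such ``commutator footprints'', and even at the level of the abelianisation $\Out(A_\Gamma)\to\GL_{|\Gamma|}(\Z)$ the image of an elementary matrix can be a product of several elementary block matrices rather than one. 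Your invocation of property $(T)$ is also not the right reduction; what you would need is bounded generation of $\SL_n(\Z)$ by elementary subgroups to conclude finiteness of the image from finite order of $\phi(E_{ij})$. The final combinatorial step (pairwise conditions $\lk(v_j)\subseteq\st(v_i)$ plus pairwise adjacency forcing $\st(v_1)=\cdots=\st(v_n)$) is correct as far as it goes, but it sits downstream of the unproven identification and so does not rescue the argument. As written, this is a research plan with a correct easy direction, not a proof.
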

In fact Wade proved a much more general result, in which the domain of the homomorphism is allowed to be any irreducible lattice in a real semisimple Lie group with finite centre and without compact factors, and with real rank $n-1$.

The aim of this paper is to prove
\begin{mainthm}[\ref{main thm}]
Let $n \geqslant 6$. Suppose that $\Gamma$ is a simplicial graph with fewer than $\frac 1 2 \binom n 2$ vertices, which does not
contain $n$ distinct vertices with equal stars, and is not a join of the discrete graph with $n$ vertices and another (possibly empty) graph. Then
every homomorphism $\SOut(F_n) \to \Out(A_\Gamma)$ is trivial.
\end{mainthm}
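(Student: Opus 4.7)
The plan is to exploit the well-known structural decomposition of $\Out(A_\Gamma)$ due to Laurence, Charney--Vogtmann, Day and Wade. After passing to a finite-index subgroup $\Out^0(A_\Gamma) \leqslant \Out(A_\Gamma)$, there is a natural surjection
\[ \Out^0(A_\Gamma) \twoheadrightarrow \prod_{[v]} \GL_{k_{[v]}}(\Z), \]
where $[v]$ runs over equivalence classes of vertices of $\Gamma$ sharing the same star (each such class is a clique of size $k_{[v]}$), and the finite quotient $\Out(A_\Gamma)/\Out^0(A_\Gamma)$ permutes these classes. The kernel $K$ of the displayed map is generated by partial conjugations and non-clique transvections. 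My strategy is to show, in turn, that the image of $\SOut(F_n)$ in the permutation quotient, in each $\GL$-factor, and finally in the kernel $K$ is trivial.

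For the permutation piece, the set being permuted has at most as many elements as $\Gamma$ has vertices, and so by assumption fewer than $\frac{1}{2}\binom{n}{2}$. The new lower bound on cardinalities of sets admitting a non-trivial $\SOut(F_n)$-action (advertised in the abstract) will force this action to be trivial. For each $\GL_{k_{[v]}}(\Z)$-factor, the no-equal-stars hypothesis gives $k_{[v]} < n$. I would then combine the minimal-dimension result for linear representations of congruence quotients of $\SL_n(\Z)$ with the congruence subgroup property of $\SL_n(\Z)$ (valid for $n \geqslant 3$) and residual finiteness of $\GL_{k_{[v]}}(\Z)$: the composition $\SOut(F_n) \to \Out(A_\Gamma) \to \GL_{k_{[v]}}(\Z) \hookrightarrow \GL_{k_{[v]}}(\C)$ is a finite-dimensional linear representation whose image must, after restriction to the $\SL_n(\Z)$-quotient of $\SOut(F_n)$, factor through some congruence quotient; but the dimension is too small for this to be non-trivial, and the residual-finiteness argument rules out the remaining Torelli contribution as well.

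The main obstacle is the final step: ruling out non-trivial maps into the kernel $K$. The hypothesis that $\Gamma$ is not a join of the discrete graph on $n$ vertices with another graph is designed precisely to obstruct the obvious construction described in the introduction. I would attack this by decomposing $\Gamma$ into its join factors $\Gamma_1 \ast \cdots \ast \Gamma_r$: the group $K$ decomposes compatibly along this join structure, and the hypotheses (no $n$ equal-star vertices, not a join with a discrete graph on $n$ vertices, fewer than $\frac{1}{2}\binom{n}{2}$ vertices) restrict each $\Gamma_i$ enough that either the corresponding subgroup of $K$ admits no non-trivial $\SOut(F_n)$-action (by a cardinality or dimension bound of the kind used above), or the situation reduces to an earlier case handled inductively on the number of vertices. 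Perfectness of $\SOut(F_n)$ for $n \geqslant 3$, together with an analysis of the abelianisations of partial-conjugation subgroups (which are essentially products of free groups), would then trivialise the remaining image. Keeping the combinatorial bookkeeping under the constraint $|V(\Gamma)| < \frac{1}{2}\binom{n}{2}$ tight enough to leave no room for a stray non-trivial map is where the argument is most delicate.
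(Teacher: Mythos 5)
Your approach differs substantially from the paper's, and unfortunately it has a genuine gap at the crucial step. The paper does not attempt any global structural decomposition of $\Out^0(A_\Gamma)$; instead, after reducing to $\Out^0(A_\Gamma)$ via the cardinality bound on actions on finite sets (which you correctly identify), it runs an induction on the number of vertices of $\Gamma$. The inductive machine is built from the $G$-invariant subgraphs supplied by \cref{lem: L} --- links of non-cones, extended stars, intersections --- together with the two abstract properties $\mathfrak C$ and $\mathfrak D$, which are exactly what the ``no $n$ equal-star vertices'' and ``not a join with $D_n$'' hypotheses verify. When a proper non-empty invariant subgraph $\Sigma$ exists, one trivialises the induced action on $A_\Sigma$ and the induced quotient action on $A_{\Gamma\smallsetminus\Sigma}$ by induction and then uses that the Torelli subgroup is torsion-free together with perfectness and the normal-closure property of $\Alt_n$ inside $\SOut(F_n)$. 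When no such $\Sigma$ exists, $\Gamma$ is forced to be a clique or a discrete graph, and assumptions (2) and (3) finish directly.

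The gap in your proposal is the treatment of the kernel $K$. First, the displayed surjection $\Out^0(A_\Gamma)\twoheadrightarrow\prod_{[v]}\GL_{k_{[v]}}(\Z)$ is not a theorem you can cite in the form stated: the non-clique transvections $w\mapsto wv$ with $\lk(w)\subsetneq\st(v)$ but $\st(v)\neq\st(w)$ cross between equal-star classes and do not lie in any obvious normal complement, so it is unclear that such a product quotient exists at all. Second, and more importantly, even granting some such short exact sequence, $K$ does \emph{not} decompose compatibly along the join factors of $\Gamma$: a partial conjugation by a vertex $v$ acts on whole connected components of $\Gamma\smallsetminus\st(v)$, which typically straddle the join decomposition. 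Your plan to ``reduce to an earlier case handled inductively'' is precisely where all the work of the theorem lives (cf.\ \cref{ass D} and \cref{replace ass D}), and it is not set up: you have not identified which subgraphs are $\SOut(F_n)$-invariant, nor given a mechanism (like \cref{killing H} together with the normal closure of $\Alt_n$) to pass from triviality on pieces to triviality on all of $\Gamma$. Finally, your $\GL$-factor step is over-engineered and slightly confused: a map $\SOut(F_n)\to\GL_k(\Z)$ with $k<n$ is already trivial by the elementary alternating-group argument (\cref{thm: maps from out to small gl}); the congruence-quotient dimension bound and CSP are used in the paper only for the action-on-finite-sets theorem, and ``restriction to the $\SL_n(\Z)$-quotient of $\SOut(F_n)$'' does not make sense for an arbitrary representation of $\SOut(F_n)$, since such a representation need not factor through $\GL_n(\Z)$ a priori.
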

Here $\SOut(F_n)$ denotes the unique index two subgroup of $\Out(F_n)$.

The proof is an induction, based on an observation present in a paper of Charney--Crisp--Vogtmann~\cite{Charneyetal2007}, elaborated further in a paper of Hensel and the author~\cite{HenselKielak2016}, which states that, typically, the graph $\Gamma$ contains many induced subgraphs $\Sigma$ which are \emph{invariant up to symmetry}, in the sense that the subgroup of $A_\Gamma$ the vertices of $\Sigma$ generate is invariant under any outer action up to an automorphism induced by a symmetry of $\Gamma$ (and up to conjugacy).

To use the induction we need to show that such subgraphs are really invariant, that is that we do not need to worry about the symmetries of $\Gamma$. To achieve this we prove
\begin{mainthm}[\ref{thm: action on finite set}]
 Every action of $\Out(F_n)$ (with $n \geqslant 6$) on a set of cardinality $m \leqslant \binom {n+1} 2$ factors through $\Z/ 2 \Z$.
\end{mainthm}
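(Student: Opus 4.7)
The plan is to linearise the set action and then appeal to the representation theory of the congruence quotients of $\SL_n(\Z)$. Given $\phi\colon \Out(F_n) \to \Sym(m)$ with $m \leq \binom{n+1}{2}$, form the associated complex permutation representation $\psi\colon \Out(F_n) \to \GL_m(\C)$ (acting by permuting a chosen basis of $\C^m$), which has the same kernel as $\phi$. Since $\Out(F_n)^{\mathrm{ab}} \cong \Z/2\Z$ with kernel $\SOut(F_n)$ for $n \geq 3$, the statement reduces to showing that $\psi|_{\SOut(F_n)}$ is trivial, equivalently that $\SOut(F_n)$ acts trivially on the underlying $m$-element set.

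The first key step is a rigidity statement: for $n \geq 6$, every complex linear representation of $\SOut(F_n)$ of dimension at most $\binom{n+1}{2}$ is trivial on the Torelli subgroup $\Torelli_n^O = \ker(\SOut(F_n) \twoheadrightarrow \SL_n(\Z))$, and hence factors through $\SL_n(\Z)$. This should follow by combining the low-dimensional linear representation theory of $\Out(F_n)$ developed in the author's earlier work \cite{Kielak2013, Kielak2015a} (which controls the possible contributions of the Torelli subgroup) with the new minimal-dimension bound for non-trivial linear representations of the congruence quotients $\SL_n(\Z/k\Z)$ flagged in the abstract.

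Granted this rigidity, the action $\phi$ of $\SOut(F_n)$ on the $m$-element set factors through an action of $\SL_n(\Z)$. Its image is a finite quotient of $\SL_n(\Z)$, which by the Bass--Milnor--Serre congruence subgroup theorem (available for $n \geq 3$) is itself a quotient of $\SL_n(\Z/k\Z)$ for some $k$. The minimal degree of a non-trivial transitive permutation representation of any such congruence quotient is at least $\frac{p^n-1}{p-1} \geq 2^n - 1$, where $p$ is the smallest prime dividing $k$ (the bound is realised by $\SL_n$ acting on the projective space $\mathbb{P}^{n-1}(\Z/p\Z)$). Since $2^n - 1 > \binom{n+1}{2}$ for $n \geq 4$, the action must be trivial, so $\phi|_{\SOut(F_n)} = 1$ as required.

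The main obstacle, and where the hypothesis $n \geq 6$ enters, is the rigidity step: ruling out non-trivial contributions from the Torelli subgroup $\Torelli_n^O$ to representations of dimension at most $\binom{n+1}{2}$. A priori $\Torelli_n^O$ supports a wealth of small-dimensional representations (for instance, subrepresentations of its abelianisation), and the bound $\binom{n+1}{2}$ is presumably the sharpness threshold at which the combined strengths of the earlier $\Out(F_n)$-representation results and the new congruence-quotient dimension bound become jointly sufficient to force every low-dimensional representation to factor through $\SL_n(\Z)$.
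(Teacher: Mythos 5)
Your proposal follows the same basic strategy as the paper: linearise the permutation action, apply a rigidity theorem from the author's earlier work to factor the representation through $\GL_n(\Z)$, and then use the congruence subgroup property together with a minimal-degree bound for congruence quotients to conclude. But there are two points where the details slip.

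First, the logical role of the new congruence-quotient dimension bound (\cref{thm: reps of GLn(Zq)}) is misplaced. You assert that the rigidity step (triviality on the Torelli subgroup) is proved by ``combining'' the earlier representation-theoretic results of \cite{Kielak2013, Kielak2015a} with the new congruence bound. In fact the rigidity step is precisely \cite[Theorem 3.13]{Kielak2013} and requires none of the new material; the new bound enters only afterwards, in the second step, to kill the induced representation of $\SL_n(\Z/p^\alpha\Z)$. Conflating the two leaves the rigidity step unsubstantiated as written, even though the right reference is present in your citation list.

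Second, in the final step you invoke a minimal \emph{permutation} degree bound $\frac{p^n-1}{p-1}$ for all congruence quotients $\SL_n(\Z/k\Z)$. This is classical for $\PSL_n(\mathbb{F}_p)$ acting on $\mathbb{P}^{n-1}(\mathbb{F}_p)$, but it is not obviously valid for $\SL_n(\Z/p^\alpha\Z)$ with $\alpha>1$, and you offer no argument; this is exactly the gap \cref{thm: reps of GLn(Zq)} is designed to fill, at the level of \emph{linear} representations rather than permutation representations. The cleaner route (and the paper's) is to stay linear throughout: pass to the standard $(m-1)$-dimensional summand of the permutation module (which also repairs the off-by-one issue with your $m$-dimensional $\psi$, since the rigidity theorem requires dimension strictly less than $\binom{n+1}{2}$), observe the induced representation of $\SL_n(\Z/p^\alpha\Z)$ has dimension $m-1 < 2^{n-1}-1 \leq p^{n-1}-1$, and conclude it is trivial by \cref{thm: reps of GLn(Zq)}. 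The resulting image of $\GL_n(\Z/p^\alpha\Z)$ is then abelian, so the perfect group $\SOut(F_n)$ dies, as you want.
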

Since $\SOut(F_n)$ is the unique index two subgroup of $\Out(F_n)$, the conclusion of this theorem is equivalent to saying that $\SOut(F_n)$ lies in the kernel of the action.

A crucial ingredient in the proof of this theorem is the following.
\begin{mainthm}[\ref{thm: reps of GLn(Zq)}]
Let $V$ be a non-trivial, irreducible $\K$-linear representation of \[\mathrm{SL}_n(\Z/q\Z)\] where $n \geqslant 3$, $q$ is a power of a prime $p$, and where $\K$ is an algebraically closed field of characteristic 0. Then
\[\dim V \geqslant \left\{ \begin{array}{ccc} 2 & \textrm{ if } & (n,p) = (3,2) \\ p^{n-1} -1 & \textrm{ otherwise } \end{array} \right.\]
\end{mainthm}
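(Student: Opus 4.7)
The plan is to induct on $k$, where $q = p^k$. For the base case $k=1$, I would cite the classical Landazuri--Seitz lower bounds on the minimum complex irreducible degrees of finite groups of Lie type, which give $\dim V \geqslant p^{n-1}-1$ for $\SL_n(\mathbb{F}_p)$ when $(n,p) \neq (3,2)$. The special case $(n,p) = (3,2)$ concerns the finite simple group $\SL_3(\mathbb{F}_2)$ of order $168$, whose non-trivial irreducible degrees $3,3,6,7,8$ make the weaker bound $\dim V \geqslant 2$ trivial, so the base case would be handled by direct inspection.

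For the inductive step, fix $k \geqslant 2$ and consider the congruence kernel
\[K = \ker\!\bigl(\SL_n(\Z/p^k\Z) \to \SL_n(\Z/p^{k-1}\Z)\bigr).\]
Writing $A = I + p^{k-1}X$ identifies $K$ as an abelian group with $\mathfrak{sl}_n(\mathbb{F}_p)$, while conjugation by $\SL_n(\Z/p^k\Z)$ descends to the adjoint action of $\SL_n(\mathbb{F}_p)$. Given a non-trivial irreducible $\K$-representation $V$, I would apply Clifford's theorem to $K$: since $K$ is an elementary abelian $p$-group, $V|_K$ decomposes as a sum of characters forming a single $\SL_n(\mathbb{F}_p)$-orbit inside the Pontrjagin dual $\widehat{K} \cong \mathfrak{sl}_n(\mathbb{F}_p)^*$. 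If that orbit consists of the trivial character, then $K$ acts trivially, so $V$ descends to a non-trivial irreducible representation of $\SL_n(\Z/p^{k-1}\Z)$, to which the inductive hypothesis applies. Otherwise $\dim V$ is at least the orbit size.

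To close the induction, I need to show that every non-trivial $\SL_n(\mathbb{F}_p)$-orbit on $\mathfrak{sl}_n(\mathbb{F}_p)^*$ has size at least $p^{n-1}-1$. First I would observe that the coadjoint action has no non-zero fixed point: the only $\mathbb{F}_p$-linear conjugation-invariant functional on $M_n(\mathbb{F}_p)$ is a multiple of the trace, which vanishes on $\mathfrak{sl}_n$ by definition. Consequently, the stabiliser of any non-trivial character is a proper subgroup, and since the action factors through the simple (for $n \geqslant 3$) quotient $\PSL_n(\mathbb{F}_p)$, the orbit size is at least the minimum index of a proper subgroup of $\PSL_n(\mathbb{F}_p)$. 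By a classical theorem of Cooperstein, this minimum index for $n \geqslant 3$ equals $(p^n-1)/(p-1) = 1 + p + \cdots + p^{n-1}$, attained by stabilisers of points in $\mathbb{P}^{n-1}(\mathbb{F}_p)$, and this comfortably exceeds $p^{n-1}-1$ for every prime $p$.

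The main obstacle is the base case: the inductive step amounts to a clean application of Clifford theory combined with two standard ingredients (triviality of $\SL_n$-invariants in $\mathfrak{sl}_n^*$ and the minimum faithful permutation degree of $\PSL_n(\mathbb{F}_p)$), but the input from Landazuri--Seitz is a substantial piece of finite group theory, and the small-case exceptions (notably $(n,p) = (3,2)$, where the exceptional isomorphism $\SL_3(\mathbb{F}_2) \cong \PSL_2(7)$ intrudes) require separate inspection, which is presumably why the statement is slightly weakened in that case.
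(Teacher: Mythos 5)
Your proposal is correct, and it takes a genuinely different route from the paper. The paper works with the full congruence kernel $N = \ker\bigl(\SL_n(\Z/q\Z) \to \SL_n(\Z/p\Z)\bigr)$ all at once: it decomposes $V$ into $N$-isotypic components, handles the multi-component case by transitivity, and in the single-component case invokes the alternating subgroup $\Alt_n \leqslant \SL_n(\Z/q\Z)$, Frobenius reciprocity, and Mennicke's solution of the Congruence Subgroup Problem to force $N$ to act by scalars, thereby reducing to Landazuri--Seitz. You instead peel off one layer of the congruence filtration at a time, identifying $\ker\bigl(\SL_n(\Z/p^k\Z) \to \SL_n(\Z/p^{k-1}\Z)\bigr)$ with the coadjoint module $\mathfrak{sl}_n(\mathbb{F}_p)$, and at each layer Clifford theory gives either a descent to level $p^{k-1}$ or a lower bound on $\dim V$ from the size of a non-trivial coadjoint orbit. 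Your route avoids the Frobenius-reciprocity-plus-CSP manoeuvre entirely, at the cost of importing the minimal faithful permutation degree of $\PSL_n(\mathbb{F}_p)$; note that for $n \geqslant 3$ there is one exception to the formula $(p^n-1)/(p-1)$, namely $\PSL_4(\mathbb{F}_2) \cong \Alt_8$ with minimal degree $8$ rather than $15$, but since $8 \geqslant 7 = p^{n-1}-1$ your bound survives.

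One step needs tightening. The claim that the coadjoint action of $\SL_n(\mathbb{F}_p)$ on $\mathfrak{sl}_n(\mathbb{F}_p)^*$ has no non-zero fixed point does not follow immediately from the observation that the only conjugation-invariant functional on $M_n(\mathbb{F}_p)$ is a multiple of the trace: a non-zero invariant functional on $\mathfrak{sl}_n$ need not extend to an invariant functional on $M_n$. When $p \nmid n$ one may extend by zero on the invariant complement $\mathfrak{z}$ of scalar matrices, but when $p \mid n$ the scalars lie inside $\mathfrak{sl}_n$ and no invariant complement exists, so the extension argument fails exactly in the case where it matters (when $\mathfrak{sl}_n$ and its dual are genuinely different modules). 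A clean fix is to identify $\mathfrak{sl}_n(\mathbb{F}_p)^*$ with $\mathfrak{gl}_n(\mathbb{F}_p)/\mathfrak{z}$ via the trace pairing; a fixed class $Y + \mathfrak{z}$ satisfies $gYg^{-1} - Y \in \mathfrak{z}$ for all $g$, and since $\SL_n(\mathbb{F}_p)$ acts trivially on $\mathfrak{z}$ the map $g \mapsto gYg^{-1}-Y$ is a homomorphism $\SL_n(\mathbb{F}_p) \to (\mathbb{F}_p,+)$, which vanishes by perfectness of $\SL_n(\mathbb{F}_p)$ for $n \geqslant 3$, forcing $Y$ to be scalar and hence the class to be zero.
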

This result seems not to be present in the literature; it extends a theorem of Landazuri--Seitz~\cite{LandazuriSeitz1974} yielding a very similar statement for $q=p$ (see \cref{thm: landazuri and seitz}).

\medskip

At the end of the paper we also offer
\begin{mainthm}[\ref{main fewer vertices}]
 There are no injective homomorphisms $\Out(A_\Gamma) \to \Out(A_{\Gamma'})$ when $\Gamma'$ has fewer vertices than $\Gamma$.
\end{mainthm}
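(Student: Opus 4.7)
Let $n := |V(\Gamma)|$ and $m := |V(\Gamma')|$, and suppose for contradiction that $\phi \colon \Out(A_\Gamma) \hookrightarrow \Out(A_{\Gamma'})$ is an injection with $m < n$. The strategy is to extract a subgroup of $\Out(A_\Gamma)$ whose image under $\phi$ is forbidden by the earlier results of the paper.

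Two extremal shapes of $\Gamma$ can be disposed of at once. If $\Gamma$ has $n$ vertices with equal stars, these form a clique that is a join factor, forcing $\Gamma = K_n$ and $\Out(A_\Gamma) = \GL_n(\Z)$; Wade's theorem then contradicts $m<n$. If instead $\Gamma$ is the discrete graph on $n$ vertices, then $\Out(A_\Gamma) = \Out(F_n)$ and $\phi$ restricts to an injection $\SOut(F_n) \hookrightarrow \Out(A_{\Gamma'})$. Since $m<n$, the graph $\Gamma'$ contains neither $n$ vertices with equal stars nor a discrete join factor of size $n$, so Theorem~\ref{main thm} forces $m \geq \tfrac{1}{2}\binom{n}{2}$, which contradicts $m<n$ as soon as $n \geq 6$; small values of $n$ would be treated by ad hoc low-dimensional arguments (for instance using rank of free abelian subgroups).

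For general $\Gamma$---neither a clique nor a discrete graph---I would proceed by induction on $n$. The idea is to find a proper induced subgraph $\Sigma \subsetneq \Gamma$ such that $\Out(A_\Sigma)$ embeds into $\Out(A_\Gamma)$---natural candidates come from join decompositions $\Gamma = \Sigma * \Sigma_0$ or from equivalence classes of vertices under the twin relations $\st(v)=\st(w)$ and $\lk(v)=\lk(w)$, each of which contributes a visible $\GL_k(\Z)$- or $\Out(F_k)$-subgroup to $\Out(A_\Gamma)$---and then to show that $\phi|_{\Out(A_\Sigma)}$ factors through $\Out(A_{\Sigma'})$ for some induced $\Sigma' \subsetneq \Gamma'$ with $|V(\Sigma')| < |V(\Sigma)|$. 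The inductive hypothesis applied to the pair $(\Sigma, \Sigma')$ then yields the contradiction.

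The principal obstacle is this last reduction step: ensuring that $\phi(\Out(A_\Sigma))$ actually stabilises a fixed induced subgraph $\Sigma'$ of $\Gamma'$, rather than merely an $\Aut(\Gamma')$-orbit of such subgraphs. This is where the machinery developed for the main theorem---in particular Theorem~\ref{thm: action on finite set} controlling finite-set actions of $\Out(F_n)$---becomes decisive: it forces the candidate ``invariant up to symmetry'' subgraph $\Sigma'$ to be honestly invariant under the relevant subgroup, so that the inductive step can close.
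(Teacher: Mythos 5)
Your proposal takes a genuinely different route from the paper, and unfortunately it is incomplete in ways you partially acknowledge, whereas the paper's actual proof is a short, self-contained counting argument that sidesteps all of the machinery you invoke.

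The paper's proof works with the $\Z_2$-rank (the largest $k$ with $(\Z/2\Z)^k \hookrightarrow G$) and shows this invariant of $\Out(A_\Gamma)$ equals $|V(\Gamma)|$ exactly. The lower bound comes from the commuting inversions, one per vertex. The upper bound comes from composing with the natural map $\Out(A_\Gamma) \to \GL_{|\Gamma|}(\Z)$: its kernel, the Torelli subgroup, is torsion free by \cref{torelli}, so any $(\Z/2\Z)^k$ in $\Out(A_\Gamma)$ injects into $\GL_{|\Gamma|}(\Z)$, whose $\Z_2$-rank is $|\Gamma|$ since commuting involutions in $\GL_{|\Gamma|}(\R)$ are simultaneously diagonalisable. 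An injection $\Out(A_\Gamma) \hookrightarrow \Out(A_{\Gamma'})$ would then force $|\Gamma| \leq |\Gamma'|$, and that is the whole proof. No case analysis, no induction, no appeal to \cref{main thm}, \cref{thm: action on finite set}, or Wade's theorem.

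Your proposal, by contrast, has several concrete gaps. First, even your base cases are not fully handled: the discrete case is dispatched only for $n \geq 6$, with "ad hoc low-dimensional arguments" gestured at but not supplied. Second, and more seriously, the general inductive step is not an argument but a hope. You need a proper induced $\Sigma \subsetneq \Gamma$ with $\Out(A_\Sigma)$ literally embedded in $\Out(A_\Gamma)$; the standard restriction maps give a subgroup of $\Out(A_\Gamma)$ \emph{surjecting} onto $\Out(A_\Sigma)$ (e.g.\ the stabiliser of $A_\Sigma$ up to conjugacy modulo the kernel), and whether a genuine copy of $\Out(A_\Sigma)$ sits inside $\Out(A_\Gamma)$ depends delicately on $\Gamma$. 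Third, even granting such a subgroup, the invariance obstacle you flag at the end is real and is not resolved by \cref{thm: action on finite set}, which controls finite-set actions of $\Out(F_n)$ only, not of an arbitrary $\Out(A_\Sigma)$; the whole point of the intermediate results in the paper is that this kind of invariance is hard-won and was only established for the $\SOut(F_n)$ source, not for general RAAG automorphism groups. As written, the inductive step cannot close. You should look for an invariant of $\Out(A_\Gamma)$ that is monotone under injections and computes $|V(\Gamma)|$ directly; the torsion-free Torelli theorem is the key enabling fact.
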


This theorem follows from looking at the $\Z/2\Z$-rank, i.e. the largest subgroup isomorphic to $(\Z/2\Z)^k$.

\section{The tools}

\subsection{Automorphisms of free groups}

\begin{dfn}[$\SOut(F_n)$]
Consider the composition
\[ \Aut(F_n) \to \GL_n(\Z) \to \Z/2\Z\]
where the first map is obtained by abelianising $F_n$, and the second map is the determinant. We define $\SAut(F_n)$ to be the kernel of this map; we define $\SOut(F_n)$ to be the image of $\SAut(F_n)$ in $\Out(F_n)$.
\end{dfn}

It is easy to see that both $\SAut(F_n)$ and $\SOut(F_n)$ are index two subgroups of, respectively, $\Aut(F_n)$ and $\Out(F_n)$.

The group $\SAut(F_n)$ has a finite presentation given by Gersten~\cite{Gersten1984}, and from this presentation one can immediately obtain the following result.

\begin{prop}[Gersten~{\cite{Gersten1984}}]
\label{prop: abelianise sout}
The abelianisation of $\SAut(F_n)$, and hence of $\SOut(F_n)$, is trivial for all $n \geqslant 3$.
\end{prop}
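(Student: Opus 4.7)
The plan is to invoke Gersten's finite presentation of $\SAut(F_n)$, whose generators are the Nielsen transvections $\rho_{ij}\colon x_i\mapsto x_ix_j$ and $\lambda_{ij}\colon x_i\mapsto x_jx_i$ for $i\neq j$ in $\{1,\dots,n\}$, and then observe that in the abelianisation each generator dies.

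The key structural relations in Gersten's presentation are the ``triangle'' commutator relations, which take the form
\[
[\rho_{ij},\rho_{jk}]=\rho_{ik}
\]
whenever $i,j,k$ are three distinct indices, and an analogous relation for the $\lambda_{ij}$. Passing to the abelianisation kills every commutator, so each $\rho_{ik}$ with $n\geqslant 3$ becomes trivial, because we can always find a third index $j\notin\{i,k\}$. The same argument handles every $\lambda_{ik}$. Since these transvections generate $\SAut(F_n)$, the abelianisation is trivial.

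The abelianisation of $\SOut(F_n)$ is then trivial as well, because it is a quotient of the abelianisation of $\SAut(F_n)$: the surjection $\SAut(F_n)\twoheadrightarrow\SOut(F_n)$ induces a surjection on abelianisations.

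The only mild obstacle is checking that Gersten's presentation really is expressed in terms of transvections satisfying the triangle relations above, rather than some repackaging of them; this is a direct inspection of the presentation, and the point is that the relations above are among the defining relations (not merely consequences), so the argument goes through at the level of the abelianised presentation without needing any further computation. The hypothesis $n\geqslant 3$ is essential, as it is precisely what allows the third index $j$ to exist.
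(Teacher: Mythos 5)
Your argument is correct and takes essentially the same route as the paper, which simply points to Gersten's presentation and says the result ``can immediately'' be read off; what you have done is supply that immediate reading. The one point worth flagging: Gersten's generators are not literally the pairs $\rho_{ij},\lambda_{ij}$ but rather automorphisms $E_{ab}$ indexed by signed letters $a,b\in\{x_1^{\pm1},\dots,x_n^{\pm1}\}$ with $a\neq b^{\pm1}$, sending $a\mapsto ab$; your $\rho_{ij}$ and $\lambda_{ij}$ correspond to $E_{x_i,x_j}$ and $E_{x_i^{-1},x_j^{-1}}$. In that indexing the commutator relation $[E_{ab},E_{bc}]=E_{ac}$ (for $a\neq c^{\pm1}$) is one of the listed defining relations, and since for $n\geqslant 3$ every generator $E_{ac}$ can be realised as such a commutator by picking a third basis letter $b$, every generator dies in the abelianisation exactly as you say. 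The passage from $\SAut(F_n)$ to $\SOut(F_n)$ via the surjection on abelianisations is also fine, and your remark that $n\geqslant 3$ is essential is correct (for $n=2$ one has $\SOut(F_2)\cong\SL_2(\Z)$, which is far from perfect).
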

It follows that $\SOut(F_n)$ is the unique subgroup of $\Out(F_n)$ of index two. 

\smallskip
We will now look at symmetric and alternating subgroups of $\Out(F_n)$, and list some corollaries of their existence.

\begin{prop}[{\cite[Proposition 1]{bridsonvogtmann2003}}]
\label{prop: A_n in SOut original}
Let $n \geqslant 3$.
There exists a symmetric subgroup of rank $n$ \[\Sym_n < \Out(F_n)\] such that
any homomorphism $\phi \colon \Out(F_n) \to G$ that is not injective on $\Sym_n$ has image of cardinality at most 2.
\end{prop}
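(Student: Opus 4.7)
The plan is to realise $\Sym_n$ inside $\Out(F_n)$ via the natural action permuting a fixed free basis. Specifically, fix a free basis $x_1,\dots,x_n$ of $F_n$ and embed $\Sym_n$ into $\Aut(F_n)$ by letting $\sigma$ act as $x_i \mapsto x_{\sigma(i)}$. This embedding descends to $\Out(F_n)$: no non-trivial permutation automorphism can be inner, since inner automorphisms act trivially on $H_1(F_n) = \Z^n$ while a non-trivial permutation of the basis acts non-trivially there. This gives the candidate $\Sym_n < \Out(F_n)$.

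Now suppose $\phi\colon \Out(F_n) \to G$ is not injective on this $\Sym_n$. Then $K := \ker(\phi|_{\Sym_n})$ is a non-trivial normal subgroup of $\Sym_n$. For $n \geqslant 5$, the classification of normal subgroups of $\Sym_n$ forces $K \supseteq \Alt_n$; for $n = 3$ the same conclusion holds (the only non-trivial proper normal subgroup is $\Alt_3$); for $n = 4$ there is the additional possibility $K = V_4$, the Klein four-subgroup, which must be treated as a special case.

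The heart of the argument is to show that the normal closure in $\Out(F_n)$ of any non-trivial element of $\Alt_n$ (respectively $V_4$ in the exceptional case $n=4$) contains $\SOut(F_n)$. I would attempt this by direct calculation in $\Aut(F_n)$: conjugate the image of a 3-cycle such as $(1\,2\,3)$ by Nielsen transvections $\rho_{ij}\colon x_i\mapsto x_ix_j$ and by permutations, and combine the resulting elements (often through commutators) to exhibit individual Nielsen transvections as products of $\Aut(F_n)$-conjugates of elements of $\Alt_n$. Since the Nielsen transvections generate $\SAut(F_n)$, this shows that the image of the normal closure in $\Out(F_n)$ already contains $\SOut(F_n)$. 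As auxiliary leverage I would use \cref{prop: abelianise sout}: any quotient of $\SOut(F_n)$ is perfect, which rules out certain small non-trivial images of $\SOut(F_n)$ inside the quotient $\Out(F_n)/\langle\!\langle \Alt_n\rangle\!\rangle$ and restricts the shape of any obstruction to the normal closure exhausting $\SOut(F_n)$.

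Once $\SOut(F_n) \leqslant \ker \phi$ is established, the fact that $\SOut(F_n)$ has index two in $\Out(F_n)$ gives $|\im \phi| \leqslant 2$ immediately. I expect the main obstacle to be the explicit word calculation in $\Aut(F_n)$ required in Step~3: producing Nielsen transvections as products of conjugates of $\Alt_n$-elements is purely computational but delicate, and the $n=4$, $K = V_4$ case is particularly sensitive because one begins with a smaller pool of elements (double transpositions rather than 3-cycles), so that more conjugates and combinations are required to reach all Nielsen transvections.
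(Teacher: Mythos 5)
The paper does not prove this statement itself but cites it directly from Bridson--Vogtmann, and your plan follows the same route as their argument: realise $\Sym_n$ by permuting a fixed basis (your verification that this embedding survives passage to $\Out(F_n)$ via the $H_1$ action is correct), reduce by the classification of normal subgroups of $\Sym_n$ to showing that the normal closure in $\Out(F_n)$ of a non-trivial element of $\Alt_n$ contains $\SOut(F_n)$, and then conclude from $[\Out(F_n):\SOut(F_n)]=2$. The normal-closure claim you isolate as ``the heart of the argument'' is exactly what this paper records as \cref{prop: A_n in SOut}, which it likewise attributes to the Bridson--Vogtmann proof; your observation that the $n=4$ case is delicate is apt, since double transpositions normally generate only $V_4$ inside $\Sym_4$ itself and one genuinely needs conjugation by the larger group $\Out(F_4)$ (and ultimately Gersten's relations among Nielsen transvections) to exhaust $\SOut(F_4)$.
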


The symmetric group is precisely the symmetric group operating on some fixed basis of $F_n$. It is easy to see that it intersects $\SOut(F_n)$ in an alternating group $\Alt_n$. Whenever we talk about the alternating subgroup $\Alt_n$ of $\SOut(F_n)$, we mean this subgroup. Note that $\SOut(F_n)$ actually contains an alternating subgroup of rank $n+1$, which is a supergroup of our $\Alt_n$; we will denote it by $\Alt_{n+1}$. There is also a symmetric supergroup $\Sym_{n+1}$ of $\Alt_{n+1}$ contained in $\Out(F_n)$.

The proof of \cite[Proposition 1]{bridsonvogtmann2003} actually allows one to prove the following proposition.

\begin{prop}
\label{prop: A_n in SOut}
Let $n \geqslant 3$.
Then $\SOut(F_n)$ is the normal closure of any non-trivial element of $\Alt_n$.
\end{prop}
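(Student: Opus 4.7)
The plan is to combine Proposition 2.3 with a small case analysis inside $\Sym_n$. Let $N \trianglelefteq \SOut(F_n)$ denote the normal closure of $\alpha$, and let $\bar N \trianglelefteq \Out(F_n)$ denote the normal closure of $\alpha$ in the larger group $\Out(F_n)$. Since $\SOut(F_n) \trianglelefteq \Out(F_n)$ we have $\bar N \leq \SOut(F_n)$, and since $N \leq \bar N$ the proposition will follow once we show $N = \bar N = \SOut(F_n)$. For the second equality, apply Proposition 2.3 to the projection $\Out(F_n) \to \Out(F_n)/\bar N$: by construction this map kills $\alpha$, so it is not injective on $\Sym_n$, and therefore has image of cardinality at most $2$. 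Combined with $\bar N \leq \SOut(F_n)$, this forces $\bar N = \SOut(F_n)$.

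To establish the first equality $N = \bar N$, I would pick any transposition $\tau \in \Sym_n \setminus \Alt_n = \Sym_n \setminus \SOut(F_n)$, so that $\Out(F_n) = \SOut(F_n) \sqcup \tau \SOut(F_n)$. A quick check shows $\bar N = N \cdot \tau N \tau^{-1}$, and that $\tau N \tau^{-1}$ is precisely the $\SOut(F_n)$-normal closure of $\tau \alpha \tau^{-1}$. Consequently it suffices to prove that $\tau \alpha \tau^{-1}$ already lies in $N$.

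For this, let $M$ be the normal closure of $\alpha$ inside $\Alt_n \leq \SOut(F_n)$, so that $M \leq N$. A short case analysis shows that $M$ is always a normal subgroup of $\Sym_n$: for $n \geq 5$, simplicity of $\Alt_n$ forces $M = \Alt_n$; for $n = 3$, $\Alt_3$ is itself simple, being cyclic of prime order; and for $n = 4$ either $\alpha$ is a $3$-cycle (so $M = \Alt_4$, since the only proper non-trivial normal subgroup of $\Alt_4$ is the Klein four-group $V_4$, which contains no $3$-cycle) or $\alpha$ is a double transposition (so $M = V_4$, which is normal even in $\Sym_4$). In all cases $\tau \alpha \tau^{-1} \in \tau M \tau^{-1} = M \leq N$, which finishes the proof. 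The only genuine obstacle is the $n = 4$ case, where $\Alt_4$ fails to be simple, but this is absorbed by the observation that $V_4 \trianglelefteq \Sym_4$.
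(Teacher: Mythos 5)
Your proof is correct, and it takes a genuinely different route from the one the paper points to. The paper simply asserts that the proposition "follows from the proof" of Bridson--Vogtmann's Proposition~1, which means re-running their argument (built on Gersten's presentation of $\SAut(F_n)$ and explicit commutator relations among Nielsen transvections) at the level of $\SOut(F_n)$ rather than $\Out(F_n)$. You instead treat the \emph{statement} of that proposition (\cref{prop: A_n in SOut original}) as a black box: applying it to the quotient $\Out(F_n) \to \Out(F_n)/\bar N$ immediately gives $\bar N = \SOut(F_n)$, and the remaining work of descending from the $\Out(F_n)$-normal closure to the $\SOut(F_n)$-normal closure is pure finite group theory. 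The coset decomposition $\Out(F_n) = \SOut(F_n) \sqcup \tau\SOut(F_n)$ giving $\bar N = N \cdot \tau N \tau^{-1}$ is exactly right, as is the reduction to showing $\tau \alpha \tau^{-1} \in N$; and your observation that the $\Alt_n$-normal closure $M$ of $\alpha$ is in every case normal in $\Sym_n$ (with $n=4$ being the only delicate case, handled by $V_4 \trianglelefteq \Sym_4$) cleanly closes the gap. What your approach buys is self-containment: one does not need to inspect the internals of Gersten's presentation or the Bridson--Vogtmann computation, only the statement already quoted in the paper. What it costs is nothing of substance, though it is worth noting explicitly (as you do implicitly) that the element $\tau$ must be chosen in $\Sym_n \setminus \Alt_n$, which is possible precisely because $\Sym_n$ maps onto $\Out(F_n)/\SOut(F_n) \cong \Z/2\Z$ via the sign of the corresponding permutation matrix.
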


Following the proof of~\cite[Theorem A]{bridsonvogtmann2003}, we can now conclude

\begin{cor}
Let
\[\phi \colon \SOut(F_n) \to \GL_k(\Z)\]
 be a homomorphism, with $n \geqslant 6$ and $k< n $. Then $\phi$ is trivial.
 \label{thm: maps from out to small gl}
\end{cor}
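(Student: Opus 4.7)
The plan is to extend $\phi$ to a complex representation and exploit the representation theory of the alternating subgroup $\Alt_{n+1} \leqslant \SOut(F_n)$ described in the discussion after \cref{prop: A_n in SOut original}.

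First I would compose $\phi$ with the tautological inclusion $\GL_k(\Z) \hookrightarrow \GL_k(\C)$, producing a $k$-dimensional complex representation $\bar\phi$ of $\SOut(F_n)$. Restricting $\bar\phi$ to $\Alt_{n+1}$ yields a complex representation of a finite simple group (simple since $n+1 \geqslant 7$), so this restriction is either trivial or faithful.

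Next I would invoke the classical fact that every non-trivial complex irreducible representation of $\Alt_m$ with $m \geqslant 7$ has dimension at least $m-1$. Decomposing $\bar\phi|_{\Alt_{n+1}}$ into complex irreducibles, if it is non-trivial then at least one summand is non-trivial and hence has dimension at least $n$; but $k < n$, a contradiction. Therefore $\bar\phi|_{\Alt_{n+1}}$ is trivial, and in particular $\phi$ sends some non-trivial element of $\Alt_n \leqslant \Alt_{n+1}$ to the identity. By \cref{prop: A_n in SOut}, $\SOut(F_n)$ is the normal closure of that element, and so $\phi$ is trivial on all of $\SOut(F_n)$.

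The only non-trivial external input is the minimal dimension of complex irreducible representations of alternating groups, which is classical (and where using $\Alt_{n+1}$ rather than $\Alt_n$ is essential: the latter gives only the weaker bound $k < n-1$). I do not expect any genuine obstacle beyond citing this fact correctly.
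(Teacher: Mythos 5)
Your proof is correct and follows essentially the same route as the paper: restrict to $\Alt_{n+1}$, use the lower bound $n$ on dimensions of non-trivial complex representations of $\Alt_{n+1}$ (valid since $n+1 \geqslant 7$), conclude the restriction is trivial by simplicity of $\Alt_{n+1}$, and finish with \cref{prop: A_n in SOut}. The only difference is that you spell out the passage to $\GL_k(\C)$ and the decomposition into irreducibles, which the paper leaves implicit.
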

\begin{proof}
For $n \geqslant 6$, the alternating group $\Alt_{n+1}$ does not have non-trivial complex representations below dimension $n$. 
Thus $\phi\vert_{\Alt_{n+1}}$ is not injective, and therefore trivial, as $\Alt_{n+1}$ is simple. Now we apply \cref{prop: A_n in SOut}.
\end{proof}

More can be said about linear representations of $\Out(F_n)$ in somewhat larger dimensions -- see~\cite{Kielak2013, Kielak2015a, TurchinWillwacher2015}.

\smallskip
Another related result that we will use is the following.
\begin{thm}[\cite{Kielak2013}]
\label{thm: main phd}
Let $n \geqslant 6$ and $m< \binom n 2$. Then every homomorphism $\Out(F_n) \to \Out(F_m)$ has image of cardinality at most 2, provided that $m \neq n$.
\end{thm}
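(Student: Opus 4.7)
The plan is to argue by contradiction. Suppose $\phi \colon \Out(F_n) \to \Out(F_m)$ has image of cardinality at least $3$. Then \cref{prop: A_n in SOut original} forces $\phi$ to be injective on $\Sym_n$, so its restriction to $\SOut(F_n)$ is non-trivial (using that $\Alt_n$ normally generates $\SOut(F_n)$, \cref{prop: A_n in SOut}). Propagating this injectivity along the inclusion $\Alt_n < \Alt_{n+1}$ via the simplicity of these groups, $\phi$ embeds $\Alt_{n+1}$ into $\Out(F_m)$.

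The core of the argument is to study the composition
\[\psi \colon \SOut(F_n) \xrightarrow{\phi|_{\SOut(F_n)}} \Out(F_m) \to \GL_m(\Z),\]
where the second map is induced by abelianising $F_m$. Once $\psi$ is shown to be trivial, $\phi(\SOut(F_n))$ lies in the Torelli group $\Torelli_m = \ker(\Out(F_m) \to \GL_m(\Z))$, which is torsion-free by a classical theorem of Baumslag--Taylor; this contradicts the injectivity of $\phi$ on the finite group $\Alt_{n+1}$. In the range $m<n$ the triviality of $\psi$ follows immediately from \cref{thm: maps from out to small gl}.

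In the intermediate range $n<m<\binom{n}{2}$, the plan is to extend $\psi$ to a complex representation $\psi_\C \colon \SOut(F_n) \to \GL_m(\C)$, decompose its restriction to $\Alt_{n+1}$ into irreducibles (whose dimensions, in increasing order, are $1$, $n$, $\frac{(n+1)(n-2)}{2}$, $\binom{n}{2}, \ldots$), and then use the perfectness of $\SOut(F_n)$ (\cref{prop: abelianise sout}) together with constraints on how these $\Alt_{n+1}$-isotypic pieces can assemble into an $\SOut(F_n)$-representation of dimension $m$. An extension of the standard $n$-dimensional $\Alt_{n+1}$-piece must come from the abelianisation $\SOut(F_n)\to \GL_n(\Z)$ and so forces $m$ to be a multiple of $n$ accounting for exactly the $n$-dimensional representation, which should be incompatible with the strict inequalities assumed. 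The main obstacle is precisely this last step -- ruling out exotic low-dimensional $\SOut(F_n)$-representations when $m$ is close to $\binom{n}{2}$ and a large $\Alt_{n+1}$-piece could in principle occur. A cleaner alternative would be to invoke Culler's realization theorem on $\phi(\Alt_{n+1})$ to obtain an $\Alt_{n+1}$-action on a rank-$m$ graph, and then use orbit-counting together with the Euler-characteristic identity $1-m = |V/\Alt_{n+1}| - |E/\Alt_{n+1}|$ (weighted by stabiliser orders) to rule out all $m$ in the excluded range.
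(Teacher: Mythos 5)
The theorem you are proving is in fact cited from~\cite{Kielak2013} rather than reproved here, but the paper does give a proof of the closely related \cref{thm: main phd for sout} (for $\SOut(F_n)$, with the stronger bound $m<\tfrac12\binom n2$), and that proof reveals the essential technique: Nielsen realisation produces an honest action of the finite group $B_n<\SOut(F_n)$ on a graph $X$ with $\pi_1(X)=F_m$, and then \cref{technical prop}, a combinatorial/homological statement about $B_n$-admissible graphs, is used to force $A$ (and hence $\Alt_{n+1}$) into the kernel. The geometric information carried by $X$ goes beyond the $\GL_m(\Z)$-representation, and is essential.

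Your high-level framing is sound as far as it goes: if $\psi\colon\SOut(F_n)\to\GL_m(\Z)$ were trivial, then $\phi(\SOut(F_n))$ would land in the Torelli subgroup of $\Out(F_m)$, which is torsion-free (covered by \cref{torelli}, applied with $\Gamma$ discrete; the Baumslag--Taylor result is the $\Aut(F_m)$ version), forcing $\phi(\Alt_{n+1})=1$ and thus $\phi|_{\SOut(F_n)}$ trivial by \cref{prop: A_n in SOut}. And in the range $m<n$, triviality of $\psi$ does follow from \cref{thm: maps from out to small gl}. The problem is the range $n<m<\binom n2$, where your argument becomes a sketch that does not close. Concretely: the decomposition theorem from~\cite{Kielak2013} that the paper invokes says that a representation of $\Out(F_n)$ of dimension below $\binom n2$ is built from trivial, sign, standard and signed-standard constituents, which means $\psi|_{\SOut(F_n)}$ could a priori contain copies of the standard $n$-dimensional $\SL_n(\Z)$-piece alongside trivials. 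Nothing in pure character theory rules this out, and the inference you suggest --- that a standard $\Alt_{n+1}$-constituent ``forces $m$ to be a multiple of $n$'' and that this ``should be incompatible with the strict inequalities'' --- is both unsubstantiated and, even if granted, not a contradiction ($m=2n$ satisfies $n<m<\binom n2$ for $n\geqslant 6$). What actually excludes those copies of the standard piece is the analysis of the realising graph $X$ in \cref{technical prop}: the possible $B_n$-admissible graphs supporting the right homology module are so constrained that $X$ must be the $(n+1)$-cage, forcing $m=n$. Your alternative suggestion (Culler realisation plus Euler-characteristic/orbit counting) is in the right spirit --- it is exactly the Nielsen-realisation step of the paper's argument --- but a bare Euler-characteristic count on $V/\Alt_{n+1}$ and $E/\Alt_{n+1}$ is far too coarse to replace \cref{technical prop}, which needs the convenient-split and fixed-vector conditions on $H_1(X;\C)$ as a $B$-module, not just the rank of $X$.

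In short: correct target (kill the abelianised action and invoke torsion-freeness of Torelli), correct handling of $m<n$, but a genuine gap for $n<m<\binom n2$ that cannot be filled by representation theory alone and requires the graph-realisation machinery the paper uses.
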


In fact, we will need to go back to the proof of the above theorem and show:
\begin{thm}
\label{thm: main phd for sout}
Let $n \geqslant 6$ and $m< \frac 1 2 \binom n 2$. Then every homomorphism \[\SOut(F_n) \to \Out(F_m)\] is trivial, provided that $m \neq n$.
\end{thm}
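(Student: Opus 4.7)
The plan is to reduce \cref{thm: main phd for sout} to the already-established \cref{thm: main phd} via a wreath-product doubling trick: from a homomorphism $\psi \colon \SOut(F_n) \to \Out(F_m)$ I will manufacture a homomorphism $\Phi \colon \Out(F_n) \to \Out(F_{2m})$, then apply the bound for $\Out(F_n)$ at the doubled parameter. Since the hypothesis $m < \frac 1 2 \binom n 2$ is equivalent to $2m < \binom n 2$, this precisely accounts for the factor of $\frac 1 2$ separating the two statements.

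Concretely, I would pick an involution $\sigma \in \Out(F_n) \smallsetminus \SOut(F_n)$ (for instance, the outer class of the transposition of two basis elements), giving a splitting $\Out(F_n) = \SOut(F_n) \rtimes \langle \sigma \rangle$. Define $\Phi$ on $\SOut(F_n)$ by $g \mapsto (\psi(g), \psi(\sigma g \sigma^{-1}))$ into the base of the wreath product $\Out(F_m) \wr \Z/2\Z$, and send $\sigma$ to the generator of the top $\Z/2\Z$; a routine calculation using $\sigma^2 = 1$ shows this is a homomorphism. Composing with the natural embedding $\Out(F_m) \wr \Z/2\Z \hookrightarrow \Out(F_{2m})$ coming from the decomposition $F_{2m} = F_m \ast F_m$ produces the desired $\Phi$. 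Provided $2m \neq n$, \cref{thm: main phd} gives $|\im \Phi| \leqslant 2$, hence $\SOut(F_n) \subseteq \ker \Phi$. To read off $\psi = 1$, pick lifts $\tilde\psi(g), \tilde\psi(\sigma g \sigma^{-1}) \in \Aut(F_m)$: triviality of $\Phi(g)$ in $\Out(F_{2m})$ means the factor-preserving automorphism of $F_m \ast F_m$ they define is conjugation by some $w \in F_{2m}$, and since distinct vertex stabilisers in the Bass--Serre tree of the splitting intersect trivially, the resulting containment $w F_m w^{-1} \subseteq F_m$ forces $w$ to lie in the first free factor. Hence $\tilde\psi(g)$ is inner in $\Aut(F_m)$ and $\psi(g)$ is trivial in $\Out(F_m)$.

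The only case left is $2m = n$, where \cref{thm: main phd} is silent; this is the main obstacle I anticipate. Here $m = n/2 < n$, so composing $\psi$ with the abelianisation $\Out(F_{n/2}) \to \GL_{n/2}(\Z)$ and invoking \cref{thm: maps from out to small gl} shows that $\im \psi$ lies in the Torelli subgroup $\Torelli_{n/2}$, which is torsion-free by the classical result of Baumslag--Taylor. Consequently $\psi$ kills every non-trivial element of $\Alt_n$, and \cref{prop: A_n in SOut} then forces $\psi$ itself to be trivial.
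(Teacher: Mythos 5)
Your strategy hinges on an embedding $\Out(F_m) \wr \Z/2\Z \hookrightarrow \Out(F_{2m})$ coming from the decomposition $F_{2m} = F_m \ast F_m$, but no such embedding exists, and this is a genuine gap. What the free-product decomposition gives is a map $\Aut(F_m)^2 \rtimes \Z/2\Z \to \Out(F_{2m})$; the problem is that its kernel is trivial rather than $\mathrm{Inn}(F_m)^2$, so it does not descend to $\Out(F_m) \wr \Z/2\Z$. Concretely, take a non-trivial $w$ in the first free factor $F_m^{(1)}$ and consider the automorphism of $F_m^{(1)} \ast F_m^{(2)}$ acting as conjugation by $w$ on $F_m^{(1)}$ and as the identity on $F_m^{(2)}$. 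If this were conjugation by some $v \in F_{2m}$, then $v$ would centralise the non-abelian factor $F_m^{(2)}$, forcing $v = 1$ and hence $w$ central, a contradiction. So this automorphism is non-trivial in $\Out(F_{2m})$ even though it represents the identity in $\Out(F_m)^2 \rtimes \Z/2\Z$. As a consequence your $\Phi$ is not well-defined: replacing the lift $\tilde\psi(g) \in \Aut(F_m)$ by $\mathrm{ad}_w \circ \tilde\psi(g)$ changes $\Phi(g)$ in $\Out(F_{2m})$. Since $\psi$ takes values in $\Out(F_m)$ and there is no lift of $\psi$ to $\Aut(F_m)$ available, the construction breaks down. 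This is precisely why the paper performs the doubling one level lower: it passes to the linear representation $V = H_1(F_m;\C)$ via the abelianisation map $\Out(F_m) \to \GL(V)$, induces the representation from $\SOut(F_n)$ to $\Out(F_n)$ (induction is always available for linear representations, dimension $2m$), and then recovers information about the action on $F_m$ itself via Nielsen realisation and a technical proposition about $B_n$-admissible graphs. Your final paragraph handling $2m = n$ is fine, and in fact that Torelli argument works verbatim for every $m < n$; but it cannot rescue the main case $m > n$, which is exactly where the wreath-product step was supposed to do the work.
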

The proof of this result forms the content of the next section.

\subsection{Homomorphisms $\SOut(F_n) \to \Out(F_m)$}

To study such homomorphisms we need to introduce finite subgroups $B_n$ and $B$ of $\SOut(F_n)$ that will be of particular use. Let $F_n$ be freely generated by $\{ a_1, \dots, a_n \}$.

\begin{dfn}
\label{defaxi}
Let us define $\delta \in \Out(F_n)$ by $\delta(a_i) = {a_i}^{-1}$ for each $i$. (Formally speaking, this defines an element in $\Aut(F_n)$; we take $\delta$ to be the image of this element in $\Out(F_n)$.)
Define $\sigma_{12} \in \Sym_{n} < \Out(F_n)$ to be the transposition swapping $a_1$ with $a_2$.
Define $\xi \in \SOut(F_n)$ by 
\begin{displaymath}
\xi = \left\{ \begin{array}{cl}  \delta  & \textrm{ if } n \textrm{ is even} \\
\delta \sigma_{12}  & \textrm{ if } n \textrm{ is odd}
\end{array} \right.
\end{displaymath}
and set $B_n = \langle \Alt_{n+1}, \xi \rangle \leqslant \SOut(F_n)$.

We also set $A$ to be either $\Alt_{n-1}$, the pointwise stabiliser of $\{1,2\}$ when $\Alt_{n+1}$ acts on $\{1,2,\ldots,n+1\}$ in the natural way (in the case of odd $n$), or $\Alt_{n+1}$ (in the case of even $n$). Furthermore, we set $B = \langle A ,\xi \rangle$.
\end{dfn}
It is easy to see that $B_n$ is a finite group -- it is a subgroup of the automorphism group of the (suitably marked) \emph{$(n+1)$-cage graph}, that is a graph with 2 vertices and $n+1$ edges connecting one to another. 

To prove \cref{thm: main phd for sout} we need to introduce some more notation from~\cite{Kielak2013}. Throughout, when we talk about modules or representations, we work over the complex numbers.

\begin{dfn}
A $B$-module $V$ admits a \emph{convenient split} \iff $V$ splits as a $B$-module into
\[ V = U \oplus U' \]
where $U$ is a sum of trivial $A$-modules and $\xi$ acts as minus the identity on $U'$.
\end{dfn}

\begin{dfn}
A graph $X$ with a $G$-action is called \emph{$G$-admissible} \iff it is connected, has no vertices of valence 2, and any $G$-invariant forest in $X$ contains no edges. Here by `invariant' we mean setwise invariant. 
\end{dfn}

\begin{prop}[\cite{Kielak2013}]
\label{technical prop}
Let $n \geqslant 6$.
 Suppose that $X$ is a $B_n$-admissible graph of rank smaller than $\binom{n+1} 2$ such that
 \begin{enumerate}
  \item the $B$-module $H_1(X;\C)$ admits a convenient split; and
  \item any vector in $H_1(X;\C)$ which is fixed by $\Alt_{n+1}$ is also fixed by $\xi$; and
  \item the action of $B_n$ on $X$ restricted to $A$ is non-trivial.
 \end{enumerate}
Then $X$ is the $(n+1)$-cage.
\end{prop}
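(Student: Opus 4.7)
The plan is to classify $\Alt_{n+1}$-orbit structures on $X$ using the rank bound, and then pin down $X$ via a homological computation constrained by conditions (1) and (2). First I would show that $\Alt_{n+1}$ acts faithfully on $X$: condition (3) gives that the $A$-action is non-trivial, and simplicity of $\Alt_{n+1}$ (for $n \geqslant 6$) then forces the kernel of its action on $X$ to be trivial. Next, admissibility forces the minimum valence of $X$ to be at least $3$: valence-$2$ vertices are forbidden outright, and the union of all leaves with their incident edges is a $B_n$-invariant forest, which by admissibility contains no edges. This yields $2|E(X)| \geqslant 3|V(X)|$, and hence $|E(X)| \leqslant 3(r - 1)$ with $r$ the rank of $X$, so in particular $|V(X)|, |E(X)| < 3\binom{n+1}{2}$.

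I would then invoke the classification of low-degree transitive actions of $\Alt_{n+1}$: for degree below a threshold close to $\binom{n+1}{2}$, the only non-trivial transitive actions are the natural one on $n+1$ points, together with a short list of exceptional cases in small $n$ (for instance the degree-$15$ actions of $\Alt_{7}$ and $\Alt_{8}$). I would combine this with the rank constraint $\dim H_1(X;\C) = r < \binom{n+1}{2}$ and the list of small-dimensional irreducible representations of $\Alt_{n+1}$ to analyse the short exact sequence
\[0 \to H_1(X;\C) \to \C^{E(X)} \to \C^{V(X)} \to \C \to 0\]
of $\Alt_{n+1}$-modules, and thereby list the possible isotypic decompositions of $H_1(X;\C)$.

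The last step is a case analysis on the resulting orbit configuration, using (i) condition (2), which forces the $\Alt_{n+1}$-invariant part of $H_1(X;\C)$ to be $\xi$-fixed; (ii) condition (1), the convenient split, which forces every summand on which $A$ acts non-trivially to lie in the $\xi$-antifixed part $U'$; and (iii) admissibility, to rule out non-cage configurations (for example configurations in which fixed edges or stars of $(n+1)$-edge-orbits assemble into $B_n$-invariant forests). The unique surviving configuration is $v_0 = 2$ fixed vertices, no orbit of non-fixed vertices and no fixed edge, and a single $(n+1)$-orbit of edges each joining the two fixed vertices --- that is, $X$ is the $(n+1)$-cage. The main obstacle I expect is precisely this case analysis, and in particular handling the exceptional small-degree transitive actions of $\Alt_{n+1}$ and verifying rigidity of the $\xi$-action on each orbit: one has to track carefully how $\xi$ permutes each orbit together with its sign and then compare against the convenient split, ruling out configurations whose $\xi$-action is incompatible with $\xi$ acting as $-\id$ on $U'$ and as the identity on $U$.
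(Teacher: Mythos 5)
The paper does not supply a proof of this proposition. It explicitly says the statement ``does not (unfortunately) feature in this form'' in \cite{Kielak2013}, but follows from the proof of \cite[Proposition~6.7]{Kielak2013}. So there is no in-paper proof to compare against, and your proposal has to be weighed against the argument in that reference, which does proceed broadly along the lines you sketch: a Nielsen-realised graph, the valence-$3$ and size bounds, the Euler exact sequence as a sequence of $\Alt_{n+1}$-modules, and an orbit-by-orbit analysis.

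The concrete gap is that the decisive part of the argument, the case analysis, is asserted rather than carried out. Note also an internal inconsistency in the orbit step: your own size bound gives $|E(X)| < 3\binom{n+1}{2}$ and $|V(X)| < 2\binom{n+1}{2}$, so the classification of transitive $\Alt_{n+1}$-actions you invoke must cover degrees up to roughly $3\binom{n+1}{2}$, not merely up to $\binom{n+1}{2}$ as you state. This lets in not only the natural degree-$(n+1)$ action and the sporadic small-$n$ cases you mention, but also, for instance, the action on $2$-element subsets (degree $\binom{n+1}{2}$) and on ordered pairs (degree $(n+1)n$). Configurations built from these can pass both the rank bound and admissibility; for example $K_{n+1}$ with the natural vertex action has rank $\binom{n+1}{2}-n < \binom{n+1}{2}$, vertex valence $n\geqslant 6$, and no invariant proper edge set. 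Such graphs must therefore be eliminated specifically via conditions (1) and (2) and by tracking the $\xi$-action, which need not preserve individual $\Alt_{n+1}$-orbits and whose sign on each isotypic piece is exactly what is at stake. You flag this as the ``main obstacle'' but do not resolve it; the claim that ``the unique surviving configuration \ldots is the $(n+1)$-cage'' is precisely the content to be proved. As written, the proposal is a plausible road map rather than a proof.
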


The above proposition  does not (unfortunately) feature in this form in~\cite{Kielak2013} -- it does however follow from the proof of \cite[Proposition 6.7]{Kielak2013}.

\begin{proof}[Proof of \cref{thm: main phd for sout}]
Let $\phi \colon \SOut(F_n) \to \Out(F_m)$ be a homomorphism.
Using Nielsen realisation for free groups (due to, independently, Culler~\cite{culler1984}, Khramtsov~\cite{khramtsov1985} and Zimmermann~\cite{zimmermann1981})  we construct a finite connected graph $X$ with fundamental group $F_m$, on which $B_n$ acts in a way realising the outer action $\phi\vert_{B_n}$. We easily arrange for $X$ to be $B_n$-admissible by collapsing invariant forests.
Note that $V = H_1(F_m;\C)$ is naturally isomorphic to $H_1(X;\C)$ as a $B_n$-module. 

We have a linear representation
\[ \SOut(F_n) \to \Out(F_m) \to \GL(V) \]
where the first map is $\phi$. We can induce it to a linear representation \[\Out(F_n) \to \GL(W)\] of dimension $\dim W = 2 \dim V = 2m$. Since we are assuming that
\[ m < \frac 1 2 \binom n 2 \]
the combination of~\cite[Lemma 3.8 and Proposition 3.11]{Kielak2013} tells us that $W$ splits as an $\Out(F_n)$-module as 
\[W = W_0 \oplus W_1 \oplus W_{n-1} \oplus W_n\]
where the action of $\Out(F_n)$  is trivial on $W_0$ but not on $W_n$, and the action of the subgroup $\SOut(F_n)$ is trivial on both.
Moreover, as $\Sym_{n+1}$ modules, $W_1$ is the sum of standard and $W_{n-1}$ of signed standard representations. We also know that $\delta$ acts on $W_i$ as multiplication by $(-1)^i$.

When $n$ is even this immediately tells us that, as a $B=B_n$-module, we have
\[ W = U \oplus U' \]
where $U = W_0 \oplus W_n$ is sum of trivial $A = \Alt_{n+1}$-modules, and $\xi = \delta$ acts on \[U' = W_1 \oplus W_{n-1}\] as minus the identity.

When $n$ is odd we can still write
\[ W = U \oplus U' \]
as a $B$-module, with $A$ acting trivially on $U$ and $\xi$ acting as minus the identity on $U'$. Here we have $W_0 \oplus W_n < U$, but $U$ also contains the trivial $A$-modules contained in $W_1 \oplus W_{n-1}$. The module $U'$ is the sum of the standard $A$-modules.
Thus $W$ admits a convenient split.

Now we claim that $V$ also admits a convenient split as a $B$-module. To define the induced $\Out(F_n)$-module $W$ we need to pick en element $\Out(F_n) \s- \SOut(F_n)$; we have already defined such an element, namely $\sigma_{12}$. The involution $\sigma_{12}$ commutes with $\xi$ and conjugates $A$ to itself. Thus, as an $A$ module, $V$ could only consist of the trivial and standard representations, since these are the only $A$-modules present in $W$. Moreover, any trivial $A$-module in $V$ is still a trivial $A$-module in $W$, and so $\xi$ acts as minus the identity on it. Therefore $V$ also admits a convenient split as a $B$-module.
This way we have verified assumption (1) of \cref{technical prop}.

Observe that the $\SOut(F_n)$-module $V$ embeds into $W$. In $W$  every $\Alt_{n+1}$-fixed vector lies in $W_0 \oplus W_n$, and here $\xi$ acts as the identity. Thus assumption (2) of \cref{technical prop} is satisfied in $W$, and therefore also in $V$.

We have verified the assumptions (1) and (2) of \cref{technical prop}; we also know that the conclusion of \cref{technical prop} fails, since the $n+1$-cage has rank $n$, which would force $m=n$, contradicting the hypothesis of the theorem. Hence we know that assumption (3) of \cref{technical prop} fails, and so $A$ acts trivially on $X$. But this implies that $A \leqslant \ker \phi$.

Note that $A$ is a subgroup of the simple group $\Alt_{n+1}$, and so we have \[\Alt_{n+1} \leqslant \ker \phi\] But then \cref{prop: A_n in SOut} tells us that $\phi$ is trivial.
\end{proof}

\subsection{Automorphisms of RAAGs}

Throughout the paper, $\Gamma$ will be a simplicial graph, and $A_\Gamma$ will be the associated RAAG, that is the group generated by the vertices of $\Gamma$, with a relation of two vertices commuting \iff they are joined by an edge in $\Gamma$.

We will often look at subgraphs of $\Gamma$, and we always take them to be induced subgraphs. Thus we will make no distinction between a subgraph of $\Gamma$ and a subset of the vertex set of $\Gamma$.

Given an induced subgraph $\Sigma \subseteq \Gamma$ we define $A_\Sigma$ to be the subgroup of $A_\Gamma$ generated by (the vertices of) $\Sigma$. Abstractly, $A_\Sigma$ is isomorphic to the RAAG associated to $\Sigma$ (since $\Sigma$ is an induced subgraph).

\begin{dfn}[Links, stars, and extended stars]
Given a subgraph $\Sigma \subseteq \Gamma$ we define
\begin{itemize}
\item $\lk(\Sigma) = \{ w \in \Gamma \mid w \textrm{ is adjacent to } v \textrm{ for all } v \in \Sigma \}$;
\item $\st(\Sigma) = \Sigma \cup \lk(\Sigma)$;
\item $\widehat \st(\Sigma) = \lk(\Sigma) \cup \lk(\lk(\Sigma))$.
\end{itemize}
\end{dfn}

\begin{dfn}[Joins and cones]
We say that two subgraphs $\Sigma, \Delta \subseteq \Gamma$ form a \emph{join} $\Sigma \ast \Delta \subseteq \Gamma$ \iff $\Sigma \subseteq \lk(\Delta)$ and $\Delta \subseteq \lk(\Sigma)$.

A subgraph $\Sigma \subseteq \Gamma$ is a \emph{cone} \iff there exists a vertex $v \in \Sigma$ such that $\Sigma = v \ast (\Sigma \s- \{v\})$. In particular, a singleton is a cone.
\end{dfn}

\begin{dfn}[Join decomposition]
Given a graph $\Sigma$ we say that
\[ \Sigma  = \Sigma_1 \ast \dots \ast \Sigma_k \]
is the \emph{join decomposition} of $\Sigma$ when each $\Sigma_i$ is non-empty, and is not a join of two non-empty subgraphs.

Each of the graphs $\Sigma_i$ is called a \emph{factor}, and the join of all the factors which are singletons is called the \emph{clique factor}.
\end{dfn}

We will often focus on a specific finite index subgroup $\Out^0(A_\Gamma)$ of $\Out(A_\Gamma)$, called the \emph{group of pure outer automorphisms of $A_\Gamma$}.
To define it we need to discuss a generating set of $\Out(A_\Gamma)$ due to Laurence~\cite{Laurence1995} (it was earlier conjectured to be a generating set by Servatius~\cite{Servatius1989}).

$\Aut(A_\Gamma)$ is generated by the following
classes of automorphisms:
\begin{enumerate}
\item Inversions
\item Partial conjugations
\item Transvections
\item Graph symmetries
\end{enumerate}
Here, an \emph{inversion} maps one generator of $A_\Gamma$ to its
inverse, fixing all other generators.

A \emph{partial conjugation} needs a vertex $v$; it conjugates all generators in one
connected component of $\Gamma \s- \st(v)$ by $v$, and fixes all other generators.

A \emph{transvection} requires vertices $v, w$ with $\st(v)\supseteq
\lk(w)$. For such $v$ and $w$, a transvection is the automorphism which maps
$w$ to $w v$, and fixes all other generators.


A \emph{graph symmetry} is an automorphism of $A_\Gamma$ which permutes
the generators according to a combinatorial automorphism of $\Gamma$.

\smallskip
The group $\Aut^0(A_\Gamma)$ of pure automorphisms is defined to be the subgroup generated by generators of the first three types, i.e. without graph symmetries. The group $\Out^0(A_\Gamma)$ of pure outer automorphisms is the quotient of $\Aut^0(A_\Gamma)$ by the inner automorphisms.

Let us quote the following result of Charney--Crisp--Vogtmann:

\begin{prop}[{\cite[Corollary 3.3]{Charneyetal2007}}]
\label{out0}
There exists a finite subgroup \[Q < \Out(A_\Gamma)\]
consisting solely of graph symmetries, such that
\[ \Out(A_\Gamma) = \Out^0(A_\Gamma) \rtimes Q \]
\end{prop}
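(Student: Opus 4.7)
The plan is to proceed in three steps: normality of $\Out^0(A_\Gamma)$ in $\Out(A_\Gamma)$, finiteness of the quotient, and explicit construction of a splitting.

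\textbf{Step 1 (Normality).} I would verify by direct computation that conjugation by a graph symmetry $\sigma \in \Aut(\Gamma)$ sends each Laurence generator of the first three types to another generator of the same type: the inversion at $v$ to the inversion at $\sigma(v)$; the partial conjugation by $v$ of a component $C$ of $\Gamma \smallsetminus \st(v)$ to the partial conjugation by $\sigma(v)$ of $\sigma(C)$ (since graph automorphisms preserve stars and permute the components of their complements); and the transvection $\tau_{v,w}$ (which requires $\lk(w) \subseteq \st(v)$) to $\tau_{\sigma(v),\sigma(w)}$, the defining inclusion being transported automatically. Since Laurence's theorem gives $\Out(A_\Gamma)$ as generated by $\Out^0(A_\Gamma)$ together with graph symmetries, this suffices for normality.

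\textbf{Step 2 (Finite quotient).} The quotient $G = \Out(A_\Gamma) / \Out^0(A_\Gamma)$ is generated by the images of graph symmetries, hence is a quotient of the finite group $\Aut(\Gamma)$. Moreover, the natural homomorphism $\iota \colon \Aut(\Gamma) \to \Out(A_\Gamma)$ is injective: a non-trivial graph symmetry acts as a non-trivial permutation of the canonical basis of the abelianisation $A_\Gamma^{\mathrm{ab}} \cong \Z^{V(\Gamma)}$, while inner automorphisms act trivially there.

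\textbf{Step 3 (Splitting).} Setting $K = \iota(\Aut(\Gamma)) \cap \Out^0(A_\Gamma)$, we have a canonical isomorphism $\iota(\Aut(\Gamma)) / K \cong G$, and the task reduces to producing a complement $Q \leqslant \iota(\Aut(\Gamma))$ to $K$. For this I would analyse $K$ combinatorially: whenever vertices $v_1, \dots, v_k$ share a common link $L$, all transvections $\tau_{v_i,v_j}$ are available, and together with inversions they realise every permutation of $\{v_1, \dots, v_k\}$ inside $\Out^0(A_\Gamma)$; conversely, a graph symmetry that permutes vertices between distinct link-equivalence classes visibly acts non-trivially on an invariant of $A_\Gamma$ inaccessible to Laurence generators of the first three types. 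Hence $K$ is exactly the subgroup of $\iota(\Aut(\Gamma))$ generated by permutations internal to the link-equivalence classes, and these classes are setwise invariant under every element of $\Aut(\Gamma)$. Fixing once and for all a linear ordering on each class and letting $Q$ be the $\iota$-image of the stabiliser of that ordering then yields the desired complement.

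The main obstacle is Step 3, specifically the exact determination of $K$: one must show that no subtle combination of partial conjugations, transvections and inversions realises a graph symmetry beyond those permuting link-equivalent vertices. Establishing this requires combining the action on the abelianisation with a finer invariant (such as the quotients of $A_\Gamma$ obtained by collapsing vertex subsets) to rule out any ``exotic'' realisation.
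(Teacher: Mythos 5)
The paper itself does not prove this statement; it is quoted directly from Charney--Crisp--Vogtmann (their Corollary 3.3), so there is no internal proof to compare against. Your decomposition into normality, finite quotient, and explicit complement is the natural route, and Steps 1 and 2 are sound. The substantive issue is the one you flag yourself: Step 3 is incomplete, and in two respects it is worse than you indicate.

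First, the characterisation of $K = \iota(\Aut(\Gamma)) \cap \Out^0(A_\Gamma)$ via ``link-equivalence'' is too coarse. The correct relation on vertices is domination-equivalence: $v \sim w$ iff $\lk(v) \subseteq \st(w)$ and $\lk(w) \subseteq \st(v)$. Unwinding the definitions, a $\sim$-class is either an anti-clique whose members share a common link, or a clique whose members share a common star; your discussion only treats the first case. The transvection $\tau_{v,w}$ (which needs $\lk(w) \subseteq \st(v)$) is equally available in the adjacent, equal-star case, and realising permutations within those classes inside $\Out^0(A_\Gamma)$ is also required for the easy containment.

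Second, and more fundamentally, the reverse inclusion --- that no graph symmetry outside the $\sim$-class-preserving subgroup can lie in $\Out^0(A_\Gamma)$ --- is asserted but not proved. The sentence about an ``invariant inaccessible to Laurence generators of the first three types'' is a placeholder, not an argument, and this is precisely the nontrivial content of the CCV result. Without it the identification of $K$ on which your construction of $Q$ rests is not established. (Given the correct $K$, your ordering trick does produce a complement: adjacency between vertices in distinct $\sim$-classes depends only on the classes, so any vertex bijection that induces the same class permutation as a graph automorphism is itself a graph automorphism, and the ordering-respecting representatives form a subgroup meeting $K$ trivially with $KQ = \iota(\Aut(\Gamma))$.) So the only genuine gap is the exact determination of $K$, but it is the heart of the matter and cannot be waved away.
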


\begin{cor}
\label{out0 for homs}
Suppose that any action of $G$ on a set of cardinality at most $k$ is trivial, and assume that $\Gamma$ has $k$ vertices. Then
any homomorphism
 \[ \phi \colon G \to \Out(A_\Gamma) \]
 has image contained in $\Out^0(A_\Gamma)$.
\end{cor}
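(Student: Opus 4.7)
The plan is to use the semidirect product decomposition from \cref{out0} to split off a finite symmetric quotient, and then apply the cardinality hypothesis to it. Composing $\phi$ with the quotient map $\Out(A_\Gamma) \to Q$ induced by the decomposition $\Out(A_\Gamma) = \Out^0(A_\Gamma) \rtimes Q$ gives a homomorphism
\[ \psi \colon G \to Q. \]
It suffices to show that $\psi$ is trivial, since $\ker(\Out(A_\Gamma) \to Q) = \Out^0(A_\Gamma)$ and so triviality of $\psi$ is equivalent to $\im \phi \subseteq \Out^0(A_\Gamma)$.

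The key observation is that every element of $Q$ is (the outer class of) a graph symmetry, i.e.\ an automorphism of $A_\Gamma$ induced by a combinatorial automorphism of $\Gamma$. In particular, $Q$ acts naturally on the vertex set $V(\Gamma)$, which has cardinality $k$ by assumption. Composing $\psi$ with this action yields a $G$-action on a set of cardinality $k$, which must be trivial by hypothesis. Hence for every $g \in G$ the element $\psi(g) \in Q$ fixes every vertex of $\Gamma$.

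The final step is to observe that a combinatorial automorphism of $\Gamma$ fixing every vertex is the identity, and hence induces the trivial automorphism of $A_\Gamma$; this shows that $Q \to \Sym(V(\Gamma))$ is injective, so $\psi$ is trivial, completing the proof. This is essentially bookkeeping and there is no serious obstacle: the whole argument is a direct consequence of the semidirect structure given by \cref{out0}, together with the fact that graph automorphisms are determined by their action on vertices.
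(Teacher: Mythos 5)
Your proof is correct and takes essentially the same route as the paper: compose $\phi$ with the projection to $Q$ coming from the semidirect decomposition of \cref{out0}, use that $Q$ acts on the $k$-element vertex set of $\Gamma$ to conclude the composite is trivial, and then invoke faithfulness of the $Q$-action on $\Gamma$ to get $\im\phi \subseteq \Out^0(A_\Gamma)$. The only difference is that you spell out the faithfulness step (that $Q \to \Sym(V(\Gamma))$ is injective) which the paper simply cites as part of \cref{out0}.
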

\begin{proof}
\cref{out0} tells us that
\[ \Out(A_{\Gamma}) = \Out^0(A_{\Gamma}) \rtimes Q\]
for some group $Q$ acting faithfully on $\Gamma$. Hence we can postcompose $\phi$ with the quotient map
\[ \Out^0(A_{\Gamma}) \rtimes Q \to Q \]
and obtain an action of $G$ on the set of vertices of $\Gamma$. By assumption this action has to be trivial, and thus $\phi(G)$ lies in the kernel of this quotient map, which is $\Out^0(A_{\Gamma})$.
\end{proof}

\begin{dfn}[$G$-invariant subgraphs]
Given a homomorphism $G \to \Out(A_\Gamma)$ we say that a subgraph $\Sigma \subseteq \Gamma$ is $G$-\emph{invariant} \iff the conjugacy class of $A_\Sigma$ is preserved (setwise) by $G$.
\end{dfn}

\begin{dfn}
Having an invariant subgraph $\Sigma \subseteq \Gamma$ allows us to discuss two additional actions:
\begin{itemize}
 \item Since, for any subgraph $\Sigma$, the normaliser of $A_\Sigma$ in $A_\Gamma$ is equal to $A_\Sigma C(A_\Sigma)$, where $C(A_\Sigma)$ is the centraliser of $A_\Sigma$ (see e.g.~\cite[Proposition 2.2]{Charneyetal2012}), any invariant subgraph $\Sigma$ gives us an \emph{induced (outer) action} $G \to \Out(A_\Sigma)$.
 \item When $\Sigma$ is invariant, we also have the
 \emph{induced quotient action} \[G \to \Out(A_\Gamma / \langle \! \langle A_\Sigma \rangle \! \rangle) \simeq \Out(A_{\Gamma \s- \Sigma})\]
\end{itemize}
\end{dfn}

Let us quote the following.

\begin{lem}[{\cite[Lemmata 4.2 and 4.3]{HenselKielak2016}}]
\label{lem: L}
For any homomorphism $G \to \Out^0(A_\Gamma)$ we have:
\begin{enumerate}
\item for every subgraph $\Sigma \subseteq \Gamma$ which is not a cone, $\lk(\Sigma)$ is $G$-invariant;
\item connected components of $\Gamma$ which are not singletons are $G$-invariant;
\item $\widehat \st(\Sigma)$ is $G$-invariant for every subgraph $\Sigma$;
\item if $\Sigma$ and $\Delta$ are $G$-invariant, then so is $\Sigma \cap \Delta$;
\item if $\Sigma$ is $G$-invariant, then so is $\st(\Sigma)$.
\end{enumerate}
\end{lem}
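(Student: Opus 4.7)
The plan is to verify the five statements by checking each on the elements of Laurence's generating set for $\Aut^0(A_\Gamma)$, namely inversions, transvections $w \mapsto wv$ (with $\st(v) \supseteq \lk(w)$), and partial conjugations $\phi_u$ (each of which conjugates one connected component of $\Gamma \smallsetminus \st(u)$ by the vertex $u$, fixing the rest of the generators). For each generator I would show that the conjugacy class of the relevant parabolic subgroup is preserved.

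Statement (5) is essentially immediate, because $A_{\st(\Sigma)}$ is the normaliser of $A_\Sigma$ in $A_\Gamma$ (using the standard identification of normalisers of parabolics in a RAAG), and normalisers are preserved by any automorphism sending a subgroup to a conjugate of itself. For (1), inversions act set-wise trivially on every parabolic; a transvection $w \mapsto wv$ alters $A_{\lk(\Sigma)}$ only when $w \in \lk(\Sigma)$, in which case $\Sigma \subseteq \lk(w) \subseteq \st(v)$, and because $\Sigma$ is not a cone one rules out $v \in \Sigma$, forcing $v \in \lk(\Sigma)$ and hence the transvection preserves $A_{\lk(\Sigma)}$ set-wise. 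A partial conjugation $\phi_u$ is handled by distinguishing $u \in \Sigma$, $u \in \lk(\Sigma)$, and $u \notin \st(\Sigma)$: in each case every element of $\lk(\Sigma)$ is either fixed by $\phi_u$, commutes with $u$, or lies in the single component of $\Gamma \smallsetminus \st(u)$ containing any fixed $\sigma \in \Sigma \smallsetminus \st(u)$, which suffices to conclude that $\phi_u(A_{\lk(\Sigma)})$ is $A_{\lk(\Sigma)}$ itself or $u^{-1} A_{\lk(\Sigma)} u$.

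Statement (2) follows from an analogous but simpler case analysis: if $C$ is a non-singleton component of $\Gamma$, then a transvection $w \mapsto wv$ with $w \in C$ forces $v$ to be adjacent to a vertex of $C$ and hence to lie in $C$ itself, while partial conjugations either act within $C$ or conjugate $A_C$ en bloc. Statement (3) reduces to (1) and (5) via the identity $\widehat \st(\Sigma) = \st(\lk(\Sigma))$: whenever $\lk(\Sigma)$ is non-empty and not a cone, applying (1) to $\lk(\Sigma)$ and then (5) closes the case; the remaining situations (empty link, or link that is itself a cone) are treated by identifying $A_{\widehat \st(\Sigma)}$ directly with a concrete centraliser. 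Statement (4) uses the RAAG identity $A_\Sigma \cap A_\Delta = A_{\Sigma \cap \Delta}$ together with the structure theorem for intersections of conjugate parabolics in a RAAG (which asserts that such intersections are again conjugate parabolics) to show that $\phi(A_{\Sigma \cap \Delta}) = \phi(A_\Sigma) \cap \phi(A_\Delta)$ is a conjugate of $A_{\Sigma \cap \Delta}$ whenever $\phi(A_\Sigma)$ and $\phi(A_\Delta)$ are conjugate to $A_\Sigma$ and $A_\Delta$ respectively.

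The hardest step will be (4), where one must assemble a single conjugator out of independent conjugators for $A_\Sigma$ and $A_\Delta$, requiring the full strength of the parabolic-intersection structure theorem; the partial-conjugation case of (1) also demands care in identifying which component of $\Gamma \smallsetminus \st(u)$ meets $\lk(\Sigma)$, using the non-cone hypothesis to guarantee the existence of a witness $\sigma \in \Sigma \smallsetminus \st(u)$.
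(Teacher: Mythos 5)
The paper itself offers no proof of this lemma: it is quoted verbatim from Lemmata 4.2 and 4.3 of Hensel--Kielak \cite{HenselKielak2016}, so there is no in-text argument to compare your proposal against. Judged on its own merits, your route -- verify (1) and (2) on the Laurence generators of $\Aut^0(A_\Gamma)$, deduce (5) from the fact that $A_{\st(\Sigma)}$ is the normaliser of $A_\Sigma$, reduce (3) to (1) and (5), and invoke the parabolic-intersection theorem for (4) -- is the natural one and can be pushed through. Two places, however, are vaguer than they should be. For (4), the statement ``such intersections are again conjugate parabolics'' is not by itself enough: you must identify the intersection as a conjugate of $A_{\Sigma \cap \Delta}$ specifically. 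The missing two steps are (i) that if $x A_\Lambda x^{-1} \subseteq g A_\Sigma g^{-1}$ then $\Lambda \subseteq \Sigma$ (one conjugates so that $A_\Lambda \subseteq A_\Sigma$ and then observes that every generator of $A_\Lambda$, being a cyclically reduced conjugate of a vertex lying in $A_\Sigma$, is a vertex of $\Sigma$), whence $\Lambda \subseteq \Sigma \cap \Delta$; and (ii) that $\phi(A_{\Sigma \cap \Delta}) \cong A_{\Sigma \cap \Delta}$ forces $\lvert\Lambda\rvert = \lvert\Sigma \cap \Delta\rvert$ and hence $\Lambda = \Sigma \cap \Delta$. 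Your phrase ``assemble a single conjugator'' somewhat obscures that this, rather than any compatibility of the two conjugators, is the real content. For (3), the fallback ``identify $A_{\widehat{\st}(\Sigma)}$ with a concrete centraliser'' when $\lk(\Sigma)$ is empty or a cone is not clean: the centraliser of $A_\Sigma$ is $A_{\lk(\Sigma) \cup Z(\Sigma)}$ where $Z(\Sigma)$ is the set of cone points of $\Sigma$, which is not $A_{\widehat{\st}(\Sigma)}$ in general. The empty case is trivial (then $\widehat{\st}(\Sigma) = \Gamma$), and the cone case is most safely handled by running the same generator-by-generator check you used for (1): for a transvection $w \mapsto wz$ with $w \in \widehat{\st}(\Sigma)$ one shows $z \in \widehat{\st}(\Sigma)$ by splitting on whether $w \in \lk(\Sigma)$ or $w \in \lk(\lk(\Sigma))$, and for a partial conjugation $\phi_u$ with $u \notin \widehat{\st}(\Sigma)$ one first extracts $\sigma_2 \in \lk(\Sigma) \smallsetminus \st(u)$ (from $u \notin \lk(\lk(\Sigma))$) and $\sigma_1 \in \Sigma \smallsetminus \st(u)$ (from $u \notin \lk(\Sigma)$ together with a short argument ruling out $\Sigma \subseteq \st(u)$), and then shows every vertex of $\widehat{\st}(\Sigma) \smallsetminus \st(u)$ is joined to $\sigma_2$ through one of $\sigma_1, \sigma_2$ inside $\Gamma \smallsetminus \st(u)$.
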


\begin{dfn}[Trivialised subgraphs]
Let $\phi \colon G \to \Out(A_\Gamma)$ be given.
We say that a subgraph $\Sigma \subseteq \Gamma$ is \emph{trivialised} \iff $\Sigma$ is $G$-invariant, and the induced action is trivial.
\end{dfn}

\begin{lem}
\label{collapsing components}
Let $\phi \colon G \to \Out(A_\Gamma)$ be a homomorphism.
Suppose that $\Sigma$ is a connected component of $\Gamma$ which is trivialised by $G$.
Consider the graph \[\Gamma' = (\Gamma \s- \Sigma) \sqcup \{s\}\] were $s$ denotes a new vertex not present in $\Gamma$. There exists an action \[\psi \colon G \to \Out(A_{\Gamma'})\] for which $\{s\}$ is invariant, and such that the quotient actions
\[ G \to \Out(A_{\Gamma \s- \Sigma})\]
induced by $\phi$ and $\psi$ by removing, respectively, $\Sigma$ and $s$, coincide.
\end{lem}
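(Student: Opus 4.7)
The plan is to realise $A_{\Gamma'}$ as a natural quotient of $A_\Gamma$ and push $\phi$ through this quotient. Because $\Sigma$ is a connected component of $\Gamma$ there are no commutation relations linking $\Sigma$ to $\Gamma \s- \Sigma$, and hence $A_\Gamma = A_\Sigma \ast A_{\Gamma \s- \Sigma}$. Identifying every vertex of $\Sigma$ with $s$, and fixing the other vertices, then yields a well-defined surjection $p \colon A_\Gamma \to A_{\Gamma'}$ (the commutations inside $\Sigma$ collapse to the trivial relation $[s,s]=1$, and no relations exist across the two components); its kernel $N$ is the normal closure in $A_\Gamma$ of the set $\{uv^{-1} : u,v \in \Sigma\}$.

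Next I would define a homomorphism $\eta \colon \Aut_\Sigma^{\id}(A_\Gamma) \to \Aut(A_{\Gamma'})$, where $\Aut_\Sigma^{\id}(A_\Gamma)$ denotes the subgroup of automorphisms fixing $A_\Sigma$ pointwise: each such $\alpha$ fixes every generator of $N$, hence preserves $N$, hence descends to an automorphism $\eta(\alpha)$ of $A_\Gamma/N = A_{\Gamma'}$ satisfying $\eta(\alpha)(s) = s$. Using that $\Sigma$ is $G$-invariant and trivialised, every $\phi(g)$ admits a representative in $\Aut_\Sigma^{\id}(A_\Gamma)$: $G$-invariance lets us modify any representative by an inner automorphism of $A_\Gamma$ to arrange $\alpha(A_\Sigma) = A_\Sigma$, and triviality of the induced outer action on $A_\Sigma$ then lets us compose with an inner automorphism coming from $A_\Sigma$ to arrange $\alpha|_{A_\Sigma} = \id$.

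The step that really uses the hypothesis that $\Sigma$ is a full connected component is the descent of $\eta$ to outer automorphism groups. The inner automorphisms of $A_\Gamma$ lying in $\Aut_\Sigma^{\id}(A_\Gamma)$ are exactly the $\iota_g$ with $g \in C_{A_\Gamma}(A_\Sigma)$; the standard fact that centralisers of nontrivial subgroups of a free factor stay inside that factor gives $C_{A_\Gamma}(A_\Sigma) = Z(A_\Sigma)$, and a direct computation shows $\eta(\iota_g) = \iota_{p(g)}$, which is inner in $A_{\Gamma'}$. Consequently $\eta$ induces a homomorphism $\bar\eta$ from the image $\Out_\Sigma^{\id}(A_\Gamma)$ of $\Aut_\Sigma^{\id}(A_\Gamma)$ in $\Out(A_\Gamma)$ into $\Out(A_{\Gamma'})$, and I would set $\psi := \bar\eta \circ \phi$.

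It remains only to verify the two conclusions. The $G$-invariance of $\{s\}$ is immediate, since every $\psi(g)$ admits a representative $\eta(\alpha)$ fixing $s$. For the quotient actions, observe that the composition of $p$ with the projection $A_{\Gamma'} \to A_{\Gamma\s-\Sigma}$ killing $s$ coincides with the projection $A_\Gamma \to A_{\Gamma\s-\Sigma}$ killing $A_\Sigma$; applying this observation to a representative $\alpha$ of $\phi(g)$ lying in $\Aut_\Sigma^{\id}(A_\Gamma)$ shows that the induced outer action of $\psi(g)$ on $A_{\Gamma\s-\Sigma}$ agrees with that of $\phi(g)$.
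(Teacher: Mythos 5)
Your proof is correct and follows the same route as the paper: both define the collapse map $p\colon A_\Gamma\to A_{\Gamma'}$ with kernel $N$ normally generated by $\{uv^{-1}:u,v\in\Sigma\}$, and both use the hypothesis that $\Sigma$ is trivialised to see that $\phi$ preserves $N$ and hence descends to $\Out(A_{\Gamma'})$. The only difference is that you spell out the descent by normalising to representatives fixing $A_\Sigma$ pointwise (and your appeal to the free-product centraliser fact is slightly more than needed, since $\eta(\iota_g)=\iota_{p(g)}$ holds for any $g$), whereas the paper simply observes that every representative of $\phi(g)$ sends the generators $uv^{-1}$ to conjugates and therefore preserves the normal subgroup $N$.
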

\begin{proof}
Consider an epimorphism $f \colon A_\Gamma \to A_{\Gamma'}$ defined on vertices of $\Gamma$ by
\[
    f(v) = \left\{ \begin{array}{ccl} v & \textrm{ if } & v \not\in \Sigma \\
                                      s & \textrm{ if } & v \in \Sigma
                    \end{array} \right. \]
The kernel of $f$ is normally generated by elements $vu^{-1}$, where $v,u \in \Sigma$ are vertices.
Since the induced action of $G$ on $A_\Sigma$ is trivialised, the action preserves each element $vu^{-1}$ up to conjugacy. But this in particular means that $G$ preserves the (conjugacy class of) the kernel of $f$, and hence $\phi$ induces an action
\[ G \to \Out(A_{\Gamma'})\]
which we call $\psi$. It is now immediate that $\psi$ is as required.
\end{proof}

\subsection{Finite groups acting on RAAGs}

\begin{dfn}
Suppose that $\Gamma$ has $k$ vertices. Then the abelianisation of $A_\Gamma$ is isomorphic to $\Z^k$, and we have the natural map
\[
\Out(A_\Gamma) \to \Out(H_1(A_\Gamma)) = \GL_k(\Z)
\]
We will refer to the kernel of this map as the \emph{Torelli subgroup}.
\end{dfn}

We will need the following consequence of independent (and more general) results of Toinet~\cite{Toinet2013} and Wade~\cite{Wade2013}.

\begin{thm}[{Toinet~\cite{Toinet2013}; Wade~\cite{Wade2013}}]
\label{torelli}
The Torelli group is torsion free.
\end{thm}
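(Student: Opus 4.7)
My plan is to show that every finite-order element $\phi$ of the Torelli subgroup is trivial; after replacing $\phi$ by a suitable power we may assume it has prime order $p$. The strategy has two ingredients: (i) produce a finite-order lift $\Phi \in \Aut(A_\Gamma)$ of $\phi$, and (ii) use the lower central series to show that any such $\Phi$ is the identity. I expect (i) to be the main obstacle, since (ii) is relatively soft.

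For step (ii), let $A_\Gamma = L_1 \supset L_2 \supset \cdots$ be the lower central series. Two properties of RAAGs are crucial: each quotient $L_n/L_{n+1}$ is torsion-free (in fact free abelian), and $A_\Gamma$ is residually nilpotent, i.e.\ $\bigcap_n L_n = \{1\}$. Moreover, the associated graded Lie algebra is generated in degree one, so since $\Phi$ acts trivially on $L_1/L_2 = H_1(A_\Gamma;\Z)$ it acts trivially on every $L_n/L_{n+1}$. Fix a generator $a$ and write $\Phi(a) = a\,c$ with $c \in L_2$. I claim $c \in L_n$ for every $n$, and I argue by induction: assuming $c \in L_n$, the identities $\Phi(c) \equiv c \pmod{L_{n+1}}$ and $[L_n,L_n] \subseteq L_{n+1}$ yield by a short expansion that $\Phi^k(a) \equiv a\,c^k \pmod{L_{n+1}}$. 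Taking $k=p$ and using $\Phi^p = \id$ gives $c^p \in L_{n+1}$, and torsion-freeness of $L_n/L_{n+1}$ upgrades this to $c \in L_{n+1}$. Residual nilpotence then forces $c = 1$, and running the argument on each generator gives $\Phi = \id$.

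The main obstacle is step (i). An arbitrary lift of $\phi$ only satisfies $\Phi^p = \iota_g$ for some $g \in A_\Gamma$, and adjusting within the coset $\phi \cdot \Inn(A_\Gamma)$ to kill this inner automorphism is not automatic; this is a Nielsen realisation question for RAAGs. The approach I would pursue is geometric: the universal cover $\widetilde S_\Gamma$ of the Salvetti complex is a CAT(0) cube complex on which $A_\Gamma$ acts freely and cocompactly. The extension $1 \to A_\Gamma \to G \to \Z/p \to 1$ determined by $\phi$ admits a geometric action on $\widetilde S_\Gamma$ (possibly after an equivariant subdivision), and the finite subgroup $\Z/p \leqslant G$ fixes a point by the Bruhat--Tits theorem. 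Since $A_\Gamma$ acts freely, this fixed point gives a splitting of the extension, hence a finite-order lift of $\phi$. An alternative, more algebraic route is to pass to the characteristic nilpotent quotients $A_\Gamma/L_k$, invoke the classical fact that the Torelli subgroup of $\Out(N)$ is torsion-free for any finitely generated torsion-free nilpotent $N$ (because it embeds into the unipotent radical of $\Aut(N_\Q)$ via the Mal'cev completion), and reconstruct $\Phi$ from the resulting consistent family of conjugators using the unique-root property of RAAGs combined with residual nilpotence.
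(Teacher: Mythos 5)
The paper does not prove this theorem --- it is quoted as a black box from Toinet and Wade --- so there is no in-paper argument to compare yours against; I can only assess the attempt on its own terms. Your step (ii) is correct and is the standard lower-central-series argument: it shows that $\mathrm{IA}(A_\Gamma) = \ker\bigl(\Aut(A_\Gamma) \to \GL(H_1)\bigr)$ is torsion-free, using that the associated graded Lie algebra of $A_\Gamma$ is generated in degree one, that the quotients $L_n/L_{n+1}$ are free abelian, and residual nilpotence. But the theorem in the paper concerns the $\Out$-Torelli, i.e.\ $\mathrm{IA}(A_\Gamma)/\mathrm{Inn}(A_\Gamma)$, so step (i) is genuinely needed: given $\Phi\in\mathrm{IA}(A_\Gamma)$ with $\Phi^p=\iota_g$, you must show $\Phi$ is inner. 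You correctly flag this as the crux, but neither of your sketches closes it.

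The geometric sketch is circular. You write that ``the finite subgroup $\Z/p\leqslant G$ fixes a point by Bruhat--Tits'', but the existence of a finite $\Z/p$ inside the extension $1\to A_\Gamma\to G\to\Z/p\to 1$ is exactly the splitting you are trying to produce; Bruhat--Tits gives a fixed point only once you already have a bounded (e.g.\ finite) subgroup. Independently, ``$G$ acts geometrically on $\widetilde S_\Gamma$'' is Nielsen realisation for RAAGs, which is not available in this generality --- for free groups it is (Culler, Khramtsov, Zimmermann, as cited in the paper, and indeed that is how one handles $\Out(F_n)$), but for general $A_\Gamma$ it is at least as hard as the target statement. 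The algebraic sketch is also incomplete: the unipotent/Mal'cev argument you describe cleanly shows that $\mathrm{IA}(N)$ is torsion-free for $N$ f.g.\ torsion-free nilpotent, but to get the $\Out$-version you need $\mathrm{IA}(N)\cap\mathrm{Inn}(N_\Q)=\mathrm{Inn}(N)$, i.e.\ that an element of $N_\Q$ whose conjugation preserves $N$ lies in $N\cdot Z(N_\Q)$; this is not a standard citable fact and needs an argument. Even granting it, your final step --- reconstructing a conjugator $h\in A_\Gamma$ from the compatible conjugators $h_k$ on the quotients $A_\Gamma/L_k$ --- is not routine: each $h_k$ is only determined modulo $Z(A_\Gamma/L_k)$, a coherent choice a priori lives in the pro-nilpotent completion rather than in $A_\Gamma$, and ``unique roots plus residual nilpotence'' does not by itself bring it back down. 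As it stands the proposal establishes the $\Aut$-Torelli statement but leaves the descent to $\Out$ open.
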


\begin{lem}
\label{killing H}
Let $\phi \colon H \to \Out(A_\Gamma)$ be a homomorphism with a finite domain.
 Suppose that $\Gamma = \Sigma_1 \cup \dots \cup \Sigma_m$, and each $\Sigma_i$ is trivialised by $H$. Then so is $\Gamma$.
\end{lem}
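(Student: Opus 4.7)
The plan is to reduce the statement to the fact that the Torelli subgroup of $\Out(A_\Gamma)$ is torsion-free (\cref{torelli}), by showing that $\phi(H)$ acts trivially on the abelianisation $H_1(A_\Gamma) = \Z^k$, where $k$ is the number of vertices of $\Gamma$. Note that ``trivialised'' for $\Gamma$ itself simply means that $\phi$ is trivial, since $\Gamma$ is automatically invariant as a subgraph of itself.

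First I would fix $h \in H$ and a vertex $v$ of $\Gamma$, and pick some index $i$ with $v \in \Sigma_i$. Because $\Sigma_i$ is $H$-invariant and the induced action $H \to \Out(A_{\Sigma_i})$ is trivial, using the description of the normaliser of $A_{\Sigma_i}$ as $A_{\Sigma_i} C(A_{\Sigma_i})$, I can choose a representative $\hat h \in \Aut(A_\Gamma)$ of $\phi(h)$ satisfying $\hat h(A_{\Sigma_i}) = A_{\Sigma_i}$ and such that $\hat h|_{A_{\Sigma_i}}$ is an inner automorphism of $A_{\Sigma_i}$. In particular, $\hat h(v)$ is a conjugate of $v$ inside $A_{\Sigma_i}$, hence also a conjugate of $v$ inside $A_\Gamma$, so $\hat h(v)$ and $v$ have the same image in $H_1(A_\Gamma)$.

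Next, I would observe that $\phi(h)$ induces a well-defined automorphism of $H_1(A_\Gamma)$, independent of the choice of lift, because inner automorphisms act trivially on the abelianisation. By the argument above, this induced automorphism fixes the image of every vertex of $\Gamma$, hence fixes a basis of $H_1(A_\Gamma)$, and is therefore the identity. Consequently $\phi(H)$ is contained in the Torelli subgroup.

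Finally, since $H$ is finite and the Torelli subgroup is torsion-free by \cref{torelli}, the map $\phi$ must be trivial, which is precisely the statement that $\Gamma$ is trivialised by $H$. There is no real obstacle here; the only point requiring mild care is the choice of lift $\hat h$, which is provided uniformly by the normaliser description of the induced action.
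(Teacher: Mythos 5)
Your proof is correct and uses essentially the same approach as the paper: abelianise to obtain $\psi\colon H\to\GL_k(\Z)$, observe that $\psi$ is trivial because the trivialised $\Sigma_i$ cover all vertices, conclude $\phi(H)$ lies in the Torelli group, and finish by torsion-freeness of Torelli. You merely expand the paper's one-line remark that $\psi$ is ``trivial on each'' image of $A_{\Sigma_i}$ by explicitly choosing representatives $\hat h$ restricting to inner automorphisms of $A_{\Sigma_i}$; this is a harmless elaboration of the same idea.
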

\begin{proof}
Consider the action
\[\psi \colon H \to \Out(H_1(A_\Gamma)) = \GL_k(\Z) \]
obtained by abelianising $A_\Gamma$, where $k$ is the number of vertices of $\Gamma$.
This $\Z$-linear representation $\psi$ preserves the images of the subgroups $A_{\Sigma_i}$, and is trivial on each of them. Thus the representation is trivial, and so $\phi(H)$ lies in the Torelli group. But the Torelli subgroup is torsion free. Hence $\phi$ is trivial.
\end{proof}

\begin{lem}
\label{disconnected case}
Let $\phi \colon G \to \Out(A_\Gamma)$ be a homomorphism.
Let
\[\Gamma = (\Gamma_1 \cup \dots \cup \Gamma_n) \sqcup \Theta\]
where $n \geqslant 1$, each $\Gamma_i$ is trivialised by $G$, and where $\Theta$ is a discrete graph with $m$ vertices.
Suppose that for some $l \in \{m, m+1\}$ any homomorphism
\[ G \to \Out(F_l) \]
is trivial. Then $\Gamma$ is trivialised, provided that $G$ is the normal closure of a finite subgroup $H$, and that $G$ contains a perfect subgroup $P$, which in turn contains $H$.
\end{lem}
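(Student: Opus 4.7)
The plan is to show that $\phi(P)$ lies in the Torelli subgroup of $\Out(A_\Gamma)$. Since the Torelli subgroup is torsion-free by \cref{torelli}, and $\phi(H)$ is a finite subgroup of $\phi(P)$, it will follow that $\phi(H) = 1$; because $G$ is the normal closure of $H$, this forces $\phi$ to be trivial.

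Let $\rho \colon \Out(A_\Gamma) \to \GL_k(\Z)$ denote the natural map induced by abelianisation, where $k$ is the number of vertices of $\Gamma$. Since each $\Gamma_i$ is trivialised, $\rho(\phi(g))$ preserves the subspace $V = \bigoplus_i A_{\Gamma_i}^{ab}$ and acts as the identity on it, for every $g \in G$. With respect to the splitting $A_\Gamma^{ab} = V \oplus A_\Theta^{ab}$, the matrix of $\rho(\phi(g))$ therefore takes the block-upper-triangular form $\left(\begin{smallmatrix} I & * \\ 0 & M_g \end{smallmatrix}\right)$, where $M_g$ is the abelianisation of the quotient action $\phi_\Theta \colon G \to \Out(A_\Gamma / \langle\!\langle A_{\Gamma_1}, \ldots, A_{\Gamma_n}\rangle\!\rangle) = \Out(F_m)$. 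If one shows that $M_g = I$ for every $g \in G$, then the image of $\rho \circ \phi$ lies in the abelian group of matrices of the form $\left(\begin{smallmatrix} I & * \\ 0 & I \end{smallmatrix}\right)$; perfectness of $P$ then forces $\rho(\phi(P)) = 1$, so $\phi(P)$ is contained in the Torelli subgroup, as required.

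It thus suffices to show $\phi_\Theta$ is trivial. When $l = m$ this is immediate from the hypothesis. When $l = m + 1$, I argue by induction on $n$. In the base case $n = 1$, applying \cref{collapsing components} to $\Gamma_1$ yields $\psi \colon G \to \Out(A_{\{s_1\} \sqcup \Theta}) = \Out(F_{m+1})$, which is trivial by hypothesis; by the same lemma its quotient action obtained by removing $\{s_1\}$ agrees with $\phi_\Theta$, so $\phi_\Theta$ is trivial. For the inductive step $n \geqslant 2$, collapse only $\Gamma_1$ to obtain $\psi \colon G \to \Out(A_{\Gamma'})$ with $\Gamma' = \{s_1\} \sqcup \Gamma_2 \sqcup \cdots \sqcup \Gamma_n \sqcup \Theta$. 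A short verification (using a representative of $\phi(g)$ that acts as the identity on $A_{\Gamma_1}$) shows that each $\Gamma_i$ with $i \geqslant 2$ remains $\psi$-trivialised. The setup for $\psi$ now has $n - 1$ trivialised components, discrete part of size $m' = m + 1$, and the hypothesis $G \to \Out(F_{m+1})$ trivial plays the role of the new hypothesis with $l' = m' \in \{m', m' + 1\}$; the inductive hypothesis yields $\psi$ trivial, and the intertwining $\rho(\psi(g)) \circ f^{ab} = f^{ab} \circ \rho(\phi(g))$ with the collapse map $f$ then forces $M_g = I$.

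The main obstacle is the bookkeeping in the inductive step: one has to verify that collapsing a trivialised component preserves the trivialisation status of the remaining components, and that the hypothesis on $l'$ continues to apply to the collapsed setup. The perfectness of $P$ is used precisely to convert ``abelian image in $\GL_k(\Z)$'' into ``trivial image'', at which point torsion-freeness of the Torelli subgroup disposes of the finite group $\phi(H)$.
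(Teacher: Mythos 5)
Your proof is correct and follows essentially the same strategy as the paper's: pass to the abelianisation, observe that trivialisation of the $\Gamma_i$ together with triviality of the quotient action on $A_\Theta$ confines $\rho(\phi(G))$ to the abelian group of block-unipotent matrices, use perfectness of $P$ to push $\phi(P)$ into the Torelli subgroup, then use torsion-freeness of the Torelli subgroup (\cref{torelli}) to kill $\phi(H)$ and hence $\phi(G)$. The one place you diverge in organisation is the $l=m+1$ case: the paper quotients out $A_{\Gamma_i}$ for $i>1$ first and then applies \cref{collapsing components} once to $\Gamma_1$, landing directly in $\Out(F_{m+1})$; you instead collapse $\Gamma_1$ first and run an induction on $n$, re-applying the whole lemma to the smaller configuration $\Gamma' = \{s_1\}\sqcup\Gamma_2\sqcup\cdots\sqcup\Gamma_n\sqcup\Theta$ with $l'=m'=m+1$. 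Both routes require the same unspoken check (that the remaining components stay trivialised after the quotient/collapse), so neither is really doing more work; the paper's single-shot version is a bit shorter, while your induction gives a slightly cleaner bookkeeping of which hypothesis on $l$ is in force. One small simplification available to you: once the inductive hypothesis gives $\psi$ trivial, you can conclude $\phi_\Theta$ trivial directly by taking quotient actions of $\psi$ (remove $s_1$, then the remaining $\Gamma_i$) and invoking the compatibility statement in \cref{collapsing components}, rather than going through the intertwining of abelianisations — though your intertwining argument is also valid, since it shows $(\rho(\phi(g))-I)$ has image inside $\ker f^{\mathrm{ab}}\subseteq V$, forcing $M_g=I$.
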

\begin{proof}
We can quotient out all of the groups $A_{\Gamma_i}$, and obtain an induced quotient action
\begin{equation}
\label{equation1}
G \to \Out(A_\Theta) \tag{$\ast$}
\end{equation}
We claim that this map is trivial. To prove the claim we have to consider two cases: the first case occurs when $l=m$ in the hypothesis of our lemma, that is every homomorphism 
\[ G \to \Out(F_m) \]
is trivial. Since $\Theta$ is a discrete graph with $m$ vertices, we have $\Out(A_\Theta) = \Out(F_m)$ and so the homomorphism \eqref{equation1} is trivial.

The second case occurs when $l = m+1$ in the hypothesis of our lemma.
In this situation we quotient $A_\Gamma$ by each subgroup $A_{\Gamma_i}$ for $i>1$, but instead of quotienting out $A_{\Gamma_1}$, we use \cref{collapsing components}. This way we obtain an outer action on a free group with $m+1$ generators, and such an action has to be trivial by assumption. Thus we can take a further quotient and conclude again that the induced quotient action \eqref{equation1} on $A_\Theta$ is trivial.
This proves the claim.
\smallskip

Now consider the action of $G$ on the abelianisation of $A_\Gamma$. We obtain a map
\[ \psi \colon G \to \GL_k(\Z) \]
where $k$ is the number of vertices of $\Gamma$. Since each $\Gamma_i$ is trivialised, and the induced quotient action on $A_\Theta$ is trivial, we see that $\psi(G)$ lies in the abelian subgroup of $\GL_n(\Z)$ formed by block-upper triangular matrices with identity blocks on the diagonal, and a single non-trivial block of fixed size above the diagonal. But $P$ is perfect, and so $\psi(P)$ must lie in the Torelli subgroup of $\Out(A_\Gamma)$. This is however torsion free by \cref{torelli}, and so $H$ must in fact lie in the kernel of $\phi$.
We conclude that the action of $G$ on $\Gamma$ is also trivial, since $G$ is the normal closure of $H$.
\end{proof}

\subsection{Some representation theory}

Let us mention a result about representations of $\PSL_n(\Z/p\Z)$, for prime $p$, due to Landazuri and Seitz:
\begin{thm}[\cite{LandazuriSeitz1974}]
\label{thm: landazuri and seitz}
\label{mapfromglnz2}
Suppose that we have a non-trivial, irreducible projective representation $\PSL_n(\Z/p\Z) \to \PGL(V)$, where $n \geqslant 3$, $p$ is prime, and $V$ is a vector space over a field $\K$ of characteristic other than $p$. Then
\[\dim V \geqslant \left\{ \begin{array}{ccc} 2 & \textrm{ if } & (n,p) = (3,2) \\ p^{n-1} -1 & \textrm{ otherwise } \end{array} \right.\]
\end{thm}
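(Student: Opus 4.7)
My plan has three steps: reduce the projective statement to a linear one about $\SL_n(\Z/p\Z)$, restrict to a convenient elementary abelian unipotent subgroup, and then exploit the action of a Levi factor on its character group to force the dimension up.

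First I would lift the projective representation through the central extension
\[ 1 \to Z \to \SL_n(\Z/p\Z) \to \PSL_n(\Z/p\Z) \to 1 \]
to a linear representation
\[ \tilde\rho \colon \SL_n(\Z/p\Z) \to \GL(V). \]
This is permitted because $\SL_n(\Z/p\Z)$ is perfect for $n \geqslant 3$, so any projective representation of $\PSL_n(\Z/p\Z)$ lifts after at most enlarging by the Schur multiplier, whose order divides $\gcd(n,p-1)$ and so does not affect the dimension bound. Non-triviality of the given projective representation, combined with simplicity of $\PSL_n(\Z/p\Z)$ for $n \geqslant 3$, guarantees that $\tilde\rho$ is faithful modulo $Z$.

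Next, I would consider the elementary abelian $p$-subgroup
\[ U = \{\, I_n + v\,e_n^\top : v \in \mathbb{F}_p^{n-1}\,\} \cong (\Z/p\Z)^{n-1}, \]
where $e_n$ is the $n$-th standard basis column. Since $\mathrm{char}\,\K = 0$, the restriction $\tilde\rho|_U$ decomposes into a direct sum of characters indexed by the Pontryagin dual $\widehat U \cong \mathbb{F}_p^{n-1}$. The Levi subgroup $L \cong \GL_{n-1}(\mathbb{F}_p)$ sitting as the top-left block normalises $U$, and the induced action on $\widehat U$ is, up to a twist, the natural one; in particular it is transitive on the non-trivial characters. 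Hence if \emph{any} non-trivial character of $U$ appears in $\tilde\rho|_U$, then \emph{all} $p^{n-1}-1$ of them appear, yielding $\dim V \geqslant p^{n-1}-1$.

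The main obstacle is to rule out the alternative that $\tilde\rho|_U$ contains only trivial characters. If this were the case then $U$ would lie in the kernel of $\tilde\rho$ modulo $Z$; but $U$ contains non-central transvections, and the conjugates in $\SL_n(\Z/p\Z)$ of a single transvection generate the whole group, so $\tilde\rho$ would be trivial modulo $Z$, contradicting non-triviality of the projective representation. In the exceptional case $(n,p) = (3,2)$ the argument above in fact still yields $\dim V \geqslant 3$, so the weaker bound $\dim V \geqslant 2$ stated in the theorem is simply the tautological lower bound for a non-trivial projective representation; the distinction becomes relevant only when the theorem is extended to higher prime powers $q = p^k$, which is the direction pursued in \cref{thm: reps of GLn(Zq)}.
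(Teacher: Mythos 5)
The paper does not prove this statement; it is quoted verbatim from Landazuri--Seitz \cite{LandazuriSeitz1974}, so there is no in-paper proof to compare against. That said, the core of your sketch is the standard argument in the generic case and its middle steps are sound: restricting to the elementary abelian unipotent radical $U \cong (\Z/p\Z)^{n-1}$, using the fact that the Levi $L \cong \GL_{n-1}(\Z/p\Z)$ normalises $U$ and permutes the non-trivial characters of $U$ transitively (so all $p^{n-1}-1$ of them occur as soon as one does), and ruling out $U \subseteq \ker\tilde\rho$ by the normal closure of a transvection.

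The genuine gap is in the lifting step, and it is exactly what makes $(n,p)=(3,2)$ exceptional. You assert that the Schur multiplier of $\PSL_n(\Z/p\Z)$ has order dividing $\gcd(n,p-1)$ and hence ``does not affect the dimension bound''. That is the generic value, but it fails for $\PSL_3(\Z/2\Z) \cong \PSL_2(\Z/7\Z)$, whose Schur multiplier is $\Z/2\Z$ even though $\gcd(3,1)=1$ (and likewise for $\PSL_4(\Z/2\Z) \cong A_8$). In such cases a projective representation need not lift to $\SL_n(\Z/p\Z)$ at all, only to the exceptional covering group; inside that cover the preimage of $U$ is a non-split central extension of $(\Z/p\Z)^{n-1}$ which is no longer abelian, and the character-counting argument collapses. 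Consequently your final claim, that the argument ``still yields $\dim V \geqslant 3$'' for $(3,2)$, is actually false: over an algebraically closed field of characteristic $7$ the natural $2$-dimensional representation of $\SL_2(\Z/7\Z)$ induces a non-trivial $2$-dimensional projective representation of $\PSL_3(\Z/2\Z)$, which is precisely why the theorem records the weaker bound $2$ there. (The hypothesis is $\operatorname{char}\K \neq p = 2$, so characteristic $7$ is permitted.) To close the gap one must either treat the exceptional Schur multiplier pairs $(3,2)$ and $(4,2)$ by direct inspection of the modular representation theory of their covering groups, or simply cite Landazuri--Seitz, which is what the paper does.
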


We offer an extension of their theorem for algebraically closed fields of characteristic 0, which we will need to discuss actions of $\Out(F_n)$ and $\SOut(F_n)$ on finite sets.
\begin{thm}
\label{thm: reps of GLn(Zq)}
Let $V$ be a non-trivial, irreducible $\K$-linear representation of $\mathrm{SL}_n(\Z/q\Z)$, where $n \geqslant 3$, $q$ is a power of a prime $p$, and where $\K$ is an algebraically closed field of characteristic 0. Then
\[\dim V \geqslant \left\{ \begin{array}{ccc} 2 & \textrm{ if } & (n,p) = (3,2) \\ p^{n-1} -1 & \textrm{ otherwise } \end{array} \right.\]
\begin{proof}
Let $\phi \colon \SL_n(\Z/q\Z) \to \GL(V)$ denote our representation. Consider $Z$, the subgroup of $\SL_n(\Z/q\Z)$ generated by diagonal matrices with all non-zero entries equal. Note that $Z$ is the centre of $\SL_n(\Z/q\Z)$. Hence $V$ splits as an $\SL_n(\Z/q\Z)$-module into intersections of eigenspaces of all elements of $Z$. Since $V$ is irreducible, we conclude that $\phi (Z)$ lies in the centre of $\GL(V)$.

First suppose that $q=p$. Consider the composition
\[\SL_n(\Z/q\Z) \to \GL(V) \to \PGL(V)\]
 We have just showed that $Z$ lies in the kernel of this composition, and so our representation descends to a representation of $\PSL_n(\Z/p\Z) \cong \SL_n(\Z/p\Z)/Z$. This new, projective representation is still irreducible. It is also non-trivial, as otherwise $V$ would have to be a 1-dimensional non-trivial $\SL_n(\Z/q\Z)$-representation. There are no such representations since $\SL_n(\Z/q\Z)$ is perfect when $p=q$. Now Theorem~\ref{thm: landazuri and seitz} yields the result.

Suppose now that $q = p^\alpha$, where $\alpha > 1$.
Let $N \unlhd \SL_n(\Z/q\Z)$ be the kernel of the natural map $\SL_n(\Z/q\Z) \to \SL_n(\Z/p\Z)$. 
As an $N$-module, by Maschke's Theorem, $V$ splits as
\[ V = \bigoplus_{i=1}^k U_i \]
where each $U_i \neq \{0\}$ is a direct sum of irreducible $N$-modules, and irreducible submodules $W \leqslant U_i, W' \leqslant U_j$ are isomorphic \iff $i=j$.

Observe that we get an induced action of $\SL_n(\Z/q\Z) / N \cong \SL_n(\Z/p\Z)$ on the set $\{ U_i , U_2, \ldots , U_k \}$. As $V$ is an irreducible $\SL_n(\Z/q\Z)$-module, the action is transitive.

Note that an action of a group on a finite set $S$ induces a representation on the vector space with basis $S$.
If $k>1$ then this representation is not the sum of trivial ones, because of the transitivity just described, and so
\[k \geqslant \left\{ \begin{array}{ccc} 2 & \textrm{ if } & (n,p) = (3,2) \\ p^{n-1} -1 & \textrm{ otherwise } \end{array} \right. \]
since our theorem holds for $\SL_n(\Z/p\Z)$.
Since $\dim U_i \geqslant 1$ for all $i$, we get $\dim V \geqslant k$ and our result follows.

\smallskip
Let us henceforth assume that $k=1$. We have \[ V=U_1 = \bigoplus_{j=1}^l W \]
where $W$ is an irreducible $N$-module.

Note that we have an alternating group $\Alt_n < \SL_n(\Z/q\Z)$ satisfying \[\Alt_n \cap N = \{ 1\}\] Let $\sigma \in \Alt_n$ be an element of order $o(\sigma)$ equal to 2 or 3.

Consider the group $M = \langle N, \sigma \rangle < \SL_n(\Z/q\Z)$. Note that $M \cong N \rtimes \Z_{o(\sigma)}$. The module $V$ splits as a direct sum of irreducible $M$-modules by Maschke's theorem. Let $X$ be such an irreducible $M$-module.

Note that $X$ as an $N$-module is a direct sum of, say, $m$ copies of the $N$-module $W$ (with $m\geqslant 1$).
Frobenius Reciprocity (see e.g. \cite[Corollary 4.1.17]{weintraub2003}) tells us that the multiplicity $m$ of $W$ (as an $N$-module) in $X$ is equal to the multiplicity of the $M$-module $X$ in the $M$-module induced from the $N$-module $W$. Hence the multiplicity of $W$ in the $M$-module induced from the $N$-module $W$ is at least $m^2$. But it is bounded above by $o(\sigma)$ and $o(\sigma) \leqslant 3$, which forces $m=1$, as $m\geqslant 1$.

This shows in particular that $X$ as  an $N$-module is isomorphic to $W$.
It also shows that the $M$-module induced from $W$ contains a submodule isomorphic to $X$. Since
\[M \cong N \rtimes \Z_{o(\sigma)}\]
 an easy calculation shows that $\sigma$ acts on this copy of $X$ as a scalar multiple of the identity matrix, i.e. via a central matrix. This is true for every irreducible $M$-submodule $X$ of $V$, and hence $\sigma$ commutes with $N$ when acting on $V$.
Since the above statement is true for each $\sigma \in \Alt_n$ of order 2 or 3, we conclude that $\phi$ factors through $\SL_n(\Z/q\Z) / [N,\Alt_n]$.
Note that we need to consider elements $\sigma$ of order 3 when we are dealing with the case $n=4$.

Mennicke's proof of the Congruence Subgroup Property ~\cite{Mennicke1965} tells us that $N$ is normally generated (as a subgroup of $\SL_n(\Z/q\Z)$) by the $p^{th}$ powers of the elementary matrices. Now $\SL_n(\Z/q\Z)$ itself is generated by elementary matrices; let us denote such a matrix by $E_{ij}$ with the usual convention. Observe that for all $\sigma \in \Alt_n$ we have
\[\phi( E_{\alpha \beta}^{-1} E_{ij}^p E_{\alpha \beta} ) = \phi( \sigma^{-1} E_{\alpha \beta}^{-1} E_{ij}^p E_{\alpha \beta} \sigma) = \phi( E_{\sigma(\alpha) \sigma(\beta)}^{-1} E_{ij}^p E_{\sigma(\alpha) \sigma(\beta)} )\]
Choose $\sigma \in A_n$ such that $\sigma(\alpha)=i$ and $\sigma(\beta) = j$. We conclude that $\phi(N)$ lies in the centre of $\phi\big(\SL_n(\Z/q\Z)\big)$. In particular, $\phi(N)$ is abelian, and hence (as $\K$ is algebraically closed) $\dim W =1$, as $W$ is an irreducible $N$-module. Since $V$ is a direct sum of $N$-modules isomorphic to $W$, the group $N$ acts via matrices in the centre of $\GL(V)$.
Hence $N$ lies in the kernel of the composition
\[ \xymatrix{ \SL_n(\Z/q\Z) \ar[r]^\phi & \GL(V) \ar[r] & \PGL(V) } \]
We have already shown that $Z$ lies in this kernel, and so our representation descends to a projective representation of $\PSL_n(\Z/p\Z)$.
If we can show that this representation is non-trivial, we can then apply Theorem~\ref{thm: landazuri and seitz} and our proof will be finished.

Suppose that this projective representation is trivial. This means that $V$ is a 1-dimensional, non-trivial $\SL_n(\Z/q\Z)$-representation. This is however impossible, since the abelianisation of $\SL_n(\Z/q\Z)$ is trivial when $n \geqslant 3$.
\end{proof}
\end{thm}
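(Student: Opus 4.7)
The plan is to reduce the general statement to the case $q=p$, where Landazuri--Seitz (\cref{thm: landazuri and seitz}) already gives the bound. The key object to analyse is the kernel $N = \ker\!\bigl(\SL_n(\Z/q\Z) \to \SL_n(\Z/p\Z)\bigr)$ of reduction modulo $p$, and the goal will be to show that, after ruling out an easy combinatorial alternative, $N$ is forced to act on $V$ by scalars, so that $V$ descends to a projective representation of $\PSL_n(\Z/p\Z)$.

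I would start with $q=p$: the centre $Z$ of $\SL_n(\Z/p\Z)$ acts on $V$ by scalars by Schur, so $V$ descends to an irreducible projective representation of $\PSL_n(\Z/p\Z)$; this is nontrivial because otherwise $V$ would be a $1$-dimensional nontrivial representation of the perfect group $\SL_n(\Z/p\Z)$. Now suppose $q = p^{\alpha}$ with $\alpha \geq 2$. Since $\operatorname{char} \K = 0$, Maschke's theorem decomposes $V$ as an $N$-module into isotypic components $V = \bigoplus_{i=1}^{k} U_i$, permuted transitively by $\SL_n(\Z/q\Z)/N \cong \SL_n(\Z/p\Z)$. If $k \geq 2$, the associated permutation representation contains a nontrivial irreducible summand, so the already-handled $q=p$ case gives $k$ at least the desired bound, and $\dim V \geq k$ suffices.

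The crux is thus the case $k=1$, i.e.\ $V \cong W^{\oplus l}$ for an irreducible $N$-module $W$. My strategy here would combine two ingredients. First, exploit the alternating subgroup $\Alt_n \subset \SL_n(\Z/q\Z)$ (disjoint from $N$): for each $\sigma \in \Alt_n$ of prime order $o(\sigma) \in \{2,3\}$ set $M = \langle N, \sigma\rangle \cong N \rtimes \Z/o(\sigma)\Z$, decompose $V$ into irreducible $M$-summands $X$, and use Frobenius reciprocity to bound the multiplicity $m$ of $W$ in $X|_N$ by $m^2 \leq o(\sigma) \leq 3$, forcing $m=1$. A short direct computation then shows that $\sigma$ acts on each such $X$ by a central matrix, so $\sigma$ commutes with $\phi(N)$ on the whole of $V$. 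Second, invoke Mennicke's congruence subgroup property, which says $N$ is normally generated in $\SL_n(\Z/q\Z)$ by the elements $E_{ij}^p$; since these are all $\Alt_n$-conjugate to $E_{12}^p$, the commutation just obtained upgrades to centrality of $\phi(N)$ inside $\phi\bigl(\SL_n(\Z/q\Z)\bigr)$. In particular $\phi(N)$ is abelian, so the irreducible $N$-module $W$ is $1$-dimensional (as $\K$ is algebraically closed), and $N$ acts on $V$ by scalars. One last descent to $\PSL_n(\Z/p\Z)$ plus an appeal to Landazuri--Seitz closes the argument, noting the resulting projective representation is nontrivial for the same perfectness reason as before.

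The main obstacle I expect is the $k=1$ case, and more precisely the passage from the Frobenius reciprocity bound to the commutation of $\phi(N)$ with each small-order element of $\Alt_n$: the bound crucially needs $o(\sigma) \leq 3$, which is exactly why one must have $\Alt_n$ supplying both transpositions-of-pairs and $3$-cycles, and it is also what ultimately controls the small exceptional case $(n,p)=(3,2)$ inherited from Landazuri--Seitz.
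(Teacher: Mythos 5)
Your proposal is correct and follows essentially the same route as the paper's own proof: reduce $q=p$ to Landazuri--Seitz via Schur's lemma and perfectness, decompose $V$ into $N$-isotypic components and use transitivity of the $\SL_n(\Z/p\Z)$-action on them to dispose of $k>1$, and in the $k=1$ case use the $\Alt_n$-subgroup with Frobenius reciprocity to force $\phi(N)$ to commute with $\Alt_n$, then Mennicke's congruence subgroup property to upgrade that to centrality of $\phi(N)$ and descend to $\PSL_n(\Z/p\Z)$. All the key moves match the paper.
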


\subsection{Actions of \texorpdfstring{$\Out(F_n)$}{Out(Fn)} on finite sets}

\begin{thm}
\label{thm: action on finite set}
Every action of $\Out(F_n)$ (with $n \geqslant 6$) on a set of cardinality $m \leqslant \binom {n+1} 2$ factors through $\Z/ 2 \Z$.
\end{thm}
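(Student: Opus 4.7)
The plan is to reinterpret the permutation action as a finite-image complex linear representation and then apply the dimension lower bound from \cref{thm: reps of GLn(Zq)}.

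Given $\rho \colon \Out(F_n) \to \Sym_m$ with $n \geq 6$ and $m \leq \binom{n+1}{2}$, it suffices (since $\SOut(F_n)$ is the unique subgroup of index two) to show $\rho(\SOut(F_n)) = 1$. I would compose $\rho$ with the permutation embedding $\Sym_m \hookrightarrow \GL_m(\C)$ to obtain a complex representation $\pi \colon \Out(F_n) \to \GL_m(\C)$ of dimension at most $\binom{n+1}{2}$ with finite image, and set $G_0 = \pi(\SOut(F_n))$, which is a finite perfect group by \cref{prop: abelianise sout}.

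If $G_0 \neq 1$, then the $\SOut(F_n)$-module $\C^m$ admits a non-trivial irreducible summand $U$ with $\dim U \leq \binom{n+1}{2}$. The main step would be to argue that $U$ factors through a congruence quotient $\SL_n(\Z/q\Z)$ along the abelianization map $\SOut(F_n) \twoheadrightarrow \SL_n(\Z)$. This would use an extension of the analysis of low-dimensional representations from \cite{Kielak2013} (in the spirit of the proof of \cref{thm: main phd for sout}) to show that any such finite-image irreducible is trivial on the kernel of abelianization (here \cref{torelli} helps by ensuring torsion-freeness of the Torelli subgroup), and then Mennicke's Congruence Subgroup Property for $\SL_n(\Z)$ (which holds for $n \geq 3$) to force the resulting $\SL_n(\Z)$-representation to factor through some $\SL_n(\Z/q\Z)$ for a prime power $q = p^\alpha$.

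Once this factorisation is in hand, \cref{thm: reps of GLn(Zq)} yields $\dim U \geq p^{n-1} - 1 \geq 2^{n-1} - 1$. The inequality $2^{n-1} - 1 > \binom{n+1}{2}$ holds for all $n \geq 6$ (at $n = 6$ one has $31 > 21$, and thereafter the exponential dominates the quadratic), which contradicts $\dim U \leq \binom{n+1}{2}$; hence $G_0 = 1$. The hard part is the factoring-through-$\SL_n(\Z)$ step: one needs a classification-style statement about low-dimensional finite-image irreducibles of $\SOut(F_n)$ extending that of \cite{Kielak2013} to the slightly larger dimensional range $\binom{n+1}{2}$. A possible alternative would be to work directly at the level of the finite quotient $G_0 \leq \Alt_m$, studying its minimal normal subgroups and using the specific generating structure of $\SOut(F_n)$ (transvections and the relations they satisfy) to constrain what perfect quotients can embed into $\Sym_m$ for our range of $m$.
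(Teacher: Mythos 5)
Your approach is essentially the paper's: pass to the linear representation of $\Sym_m$, show it factors through $\GL_n(\Z)$, invoke the Congruence Subgroup Property to reduce to $\SL_n(\Z/p^\alpha\Z)$, apply \cref{thm: reps of GLn(Zq)}, and close with the numerical estimate $2^{n-1}-1 > \binom{n+1}{2}$ for $n \geq 6$. The step you flag as the ``hard part'' is not an extension that needs to be proved; the paper cites \cite[Theorem 3.13]{Kielak2013} directly, which states that any linear representation of $\Out(F_n)$ of dimension strictly less than $\binom{n+1}{2}$ factors through $\Out(F_n) \to \GL_n(\Z)$. To land cleanly in that dimension range the paper uses the $(m-1)$-dimensional standard irreducible of $\Sym_m$ rather than your $m$-dimensional permutation representation; since $m \leq \binom{n+1}{2}$, this gives $m-1 < \binom{n+1}{2}$ and the citation applies at once. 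Your appeal to \cref{torelli} is unnecessary for this argument, and your proposed alternative via minimal normal subgroups of $G_0 \leq \Sym_m$ is not pursued in the paper.
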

\begin{proof}
Suppose that we are given such an action. It gives us
\[ \Out(F_n) \to \Sym_m \into \GL_{m-1}(\C) \]
where $\Sym_m$ denotes the symmetric group of rank $m$, and the second map is the standard irreducible representation of $\Sym_m$.
Since \[m-1 < \binom{n+1} 2\]
the composition factors through the natural map $\Out(F_n) \to \GL_n(\Z)$ induced by abelianising $F_n$, by \cite[Theorem 3.13]{Kielak2013}. Thus we have
\[ \Out(F_n) \to \GL_n(\Z) \to \GL_{m-1}(\C) \]
with finite image. The Congruence Subgroup Property \cite{Mennicke1965} tells us that the map $\GL_n(\Z) \to \GL_{m-1}(\C)$ factors through a congruence map
\[ \GL_n(\Z) \to \GL_n(\Z / p^\alpha \Z) \]
for some positive integer $\alpha$ and some prime $p$. Now \[m -1 < 2^{n-1} -1 \leqslant p^{n-1} -1\] and so the restricted  map $\SL_n(\Z / p^\alpha \Z) \to \GL_{m-1}(\C)$ must be trivial by \cref{thm: reps of GLn(Zq)}. Thus the given action factors through $\GL_n(\Z / p^\alpha \Z) / \SL_n(\Z / p^\alpha \Z)$, which is an abelian group. Therefore $\SOut(F_n)$ lies in the kernel of $\phi$, since it is perfect (\cref{prop: abelianise sout}), and we are finished.
\end{proof}

\begin{cor}
\label{cor: action on finite set}
Every action of $\SOut(F_n)$ (with $n \geqslant 6$) on a set of cardinality $m \leqslant \frac 1 2 \binom {n+1} 2$ is trivial.
\end{cor}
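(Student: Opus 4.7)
The plan is to reduce the claim about $\SOut(F_n)$-actions to the already established result about $\Out(F_n)$-actions (\cref{thm: action on finite set}) by inducing the action up to $\Out(F_n)$.

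Suppose we are given an action $\phi \colon \SOut(F_n) \to \Sym(S)$ on a set $S$ with $|S| = m \leqslant \frac{1}{2}\binom{n+1}{2}$. Since $\SOut(F_n)$ has index $2$ in $\Out(F_n)$, I would form the induced $\Out(F_n)$-set
\[ T = \Out(F_n) \times_{\SOut(F_n)} S, \]
whose cardinality is $2m \leqslant \binom{n+1}{2}$. Concretely, after choosing a coset representative $\tau \in \Out(F_n) \smallsetminus \SOut(F_n)$, the set $T$ decomposes as two copies of $S$, and the action of any $h \in \SOut(F_n)$ on the ``identity'' copy $\{1\} \times S$ reproduces the original action via $\phi$.

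Now apply \cref{thm: action on finite set} to the resulting homomorphism $\Out(F_n) \to \Sym(T)$: since $|T| \leqslant \binom{n+1}{2}$, this action factors through $\Z/2\Z$. As $\SOut(F_n)$ is the unique index-two subgroup of $\Out(F_n)$ (by \cref{prop: abelianise sout}), it must lie in the kernel; hence $\SOut(F_n)$ acts trivially on $T$, and in particular trivially on the subset $\{1\} \times S$. This forces $\phi$ to be trivial, as required.

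There is no substantive obstacle here: the argument is essentially a formal consequence of \cref{thm: action on finite set}, where the factor of $\tfrac{1}{2}$ in the hypothesis on $m$ exactly accounts for the doubling of the set size under induction from $\SOut(F_n)$ to $\Out(F_n)$.
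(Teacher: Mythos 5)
Your proposal is correct and is essentially the same argument the paper gives: the paper's one-line proof observes that an action of an index-$k$ subgroup on a set of size $m$ induces an action of the ambient group on a set of size $km$, and then applies \cref{thm: action on finite set} with $k=2$. You have simply spelled out the induction construction and the final reduction step in more detail.
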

\begin{proof}
 Every action of an index $k$ subgroup of a group $G$ on a set of cardinality $m$ can be induced to an action of $G$ on a set of cardinality $km$.
\end{proof}

\section{The main result}

\begin{dfn}
Let $D_n$ denote the discrete graph with $n$ vertices.
\end{dfn}

\begin{dfn}
Let $\phi \colon G \to \Out(A_\Gamma)$ be a homomorphism, and let $n$ be fixed.
We define two properties of the action (with respect to $n$):
\begin{enumerate}
 \item[$\mathfrak C$] For every $G$-invariant clique $\Sigma$ in $\Gamma$ with at least $n$ vertices there exists a $G$-invariant subgraph $\Theta$ of $\Gamma$, such that $\Theta \cap \Sigma$ is a proper non-empty subgraph of $\Sigma$.
 \item[$\mathfrak D$] For every $G$-invariant subgraph $\Delta$ of $\Gamma$ isomorphic to $D_n$, there exists a $G$-invariant subgraph $\Theta$ of $\Gamma$, such that $\Theta \cap \Delta$ is a proper non-empty subgraph of $\Delta$.
 \end{enumerate}
\end{dfn}

The notation $\mathfrak C$ stands for `clique', and $\mathfrak D$ for `discrete'.

\begin{lem}
 \label{C and D induced}
Let $\phi \colon G \to \Out(A_\Gamma)$ be an action satisfying $\mathfrak C$ and $\mathfrak D$. Let $\Omega$ be a $G$-invariant subgraph of $\Gamma$. Then both the induced action and the induced quotient action satisfy $\mathfrak C$ and $\mathfrak D$.
\end{lem}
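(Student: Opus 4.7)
The plan is to verify $\mathfrak C$ and $\mathfrak D$ for each of the two smaller actions by lifting the candidate subgraphs (cliques of size $\geqslant n$ or copies of $D_n$) back up to $\Gamma$, invoking $\mathfrak C$ or $\mathfrak D$ for the original action there, and pushing the resulting breaking subgraph $\Theta$ back down. The two cases (the induced action on $A_\Omega$, and the induced quotient action on $A_{\Gamma \s- \Omega}$) are parallel.

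For the induced action $G \to \Out(A_\Omega)$, let $\Sigma \subseteq \Omega$ be $G$-invariant for the induced action and of the shape required by $\mathfrak C$ or $\mathfrak D$. For each $g \in G$ pick a representative $\tilde g \in \Aut(A_\Gamma)$ of $\phi(g)$ with $\tilde g(A_\Omega) = A_\Omega$; then $\tilde g \vert_{A_\Omega}$ represents the induced outer action on $A_\Omega$, and invariance of $\Sigma$ in $A_\Omega$ yields $\tilde g(A_\Sigma) = h A_\Sigma h^{-1}$ for some $h \in A_\Omega \subseteq A_\Gamma$, so $\Sigma$ is also $G$-invariant as a subgraph of $\Gamma$. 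Applying $\mathfrak C$ or $\mathfrak D$ to $\phi$, I obtain a $G$-invariant $\Theta \subseteq \Gamma$ with $\Theta \cap \Sigma$ a proper non-empty subgraph of $\Sigma$. Setting $\Theta' := \Theta \cap \Omega$, this is again $G$-invariant (this is the intersection statement of \cref{lem: L}(4), which rests on the fact that $A_\Theta \cap A_\Omega = A_{\Theta \cap \Omega}$ is again a parabolic subgroup), and $\Theta' \cap \Sigma = \Theta \cap \Sigma$ since $\Sigma \subseteq \Omega$, so $\Theta'$ witnesses the required property for the induced action.

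For the induced quotient action $G \to \Out(A_{\Gamma \s- \Omega})$, let $\Sigma \subseteq \Gamma \s- \Omega$ be $G$-invariant under the quotient action and of the right shape. Using the canonical retraction $A_\Gamma \to A_{\Gamma \s- \Omega}$ (which identifies $A_{\Gamma \s- \Omega}$ with a parabolic subgroup of $A_\Gamma$), a representative $\tilde g \in \Aut(A_\Gamma)$ of $\phi(g)$ chosen to preserve $A_\Omega$ setwise descends to an automorphism $\bar g$ of $A_{\Gamma \s- \Omega}$, and invariance of $\Sigma$ for $\bar g$ — together with $\tilde g(A_\Omega)=A_\Omega$ — allows one to exhibit an honest conjugator in $A_\Gamma$ showing that $\Sigma$ is $G$-invariant for $\phi$ as well. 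Now $\mathfrak C$ or $\mathfrak D$ in $\Gamma$ produces a $G$-invariant $\Theta \subseteq \Gamma$ with $\Theta \cap \Sigma$ a proper non-empty subgraph of $\Sigma$, and setting $\Theta' := \Theta \s- \Omega \subseteq \Gamma \s- \Omega$ gives a subgraph $G$-invariant under the quotient action with $\Theta' \cap \Sigma = \Theta \cap \Sigma$, since $\Sigma \cap \Omega = \emptyset$.

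The step I expect to require the most care is the lifting argument in the quotient case: transferring invariance across the quotient map $A_\Gamma \to A_{\Gamma \s- \Omega}$ in both directions (upward, to invoke the hypothesis $\mathfrak C$/$\mathfrak D$ in $\Gamma$, and downward, to see that $\Theta \s- \Omega$ is invariant under the quotient action) is the counterpart of \cref{lem: L}(4) in the quotient setting and relies on the retraction splitting. Everything else is a direct restriction/intersection argument that mirrors the induced-action case.
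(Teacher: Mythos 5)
Your handling of the induced action on $A_\Omega$ is correct and matches the paper: choosing $\tilde g$ with $\tilde g(A_\Omega)=A_\Omega$, an $A_\Omega$-conjugator witnessing invariance of $\Sigma$ in the induced action is also an $A_\Gamma$-conjugator, so $\Sigma$ is $\phi$-invariant, and $\Theta':=\Theta\cap\Omega$ is invariant by \cref{lem: L}(4). The paper's proof is terser but follows this same line, and your use of the intersection statement is exactly the intended one.

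The gap is in the quotient case, and you in fact point a finger at the right spot but then assert rather than prove it. You claim that $\bar g$-invariance of $\Sigma$ in $A_{\Gamma\s-\Omega}$ together with $\tilde g(A_\Omega)=A_\Omega$ ``allows one to exhibit an honest conjugator in $A_\Gamma$ showing that $\Sigma$ is $G$-invariant for $\phi$ as well.'' That implication is false in general. Take $\Gamma$ to be the discrete graph on $\{a,b\}$, so $A_\Gamma=F_2$, let $\Omega=\{a\}$, $\Sigma=\{b\}$, and let $\tilde g$ be the transvection $a\mapsto a$, $b\mapsto ba$ (which lies in $\Aut^0(A_\Gamma)$). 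Then $\tilde g(A_\Omega)=A_\Omega$ and the induced map on the quotient $A_{\Gamma\s-\Omega}\cong\langle b\rangle$ is the identity, so $\Sigma$ is invariant under the quotient action; but $\tilde g(A_\Sigma)=\langle ba\rangle$ is not conjugate to $\langle b\rangle$ in $F_2$, so $\Sigma$ is not $\phi$-invariant. Thus the step you flagged as the delicate one does not follow from what you have, and the quotient half of your argument does not close. (The downward direction you also use — that the image in $\Gamma\s-\Omega$ of a $\phi$-invariant $\Theta$ is invariant under the quotient action — is fine, since conjugacy is preserved by the quotient map.) For what it is worth, the paper's one-line proof does not address this lifting either, so to repair the argument you would need an additional input specific to the subgraphs at hand (cliques of size $\geqslant n$ and copies of $D_n$), or a reformulation that avoids invoking $\mathfrak C$/$\mathfrak D$ for $\phi$ on a subgraph whose $\phi$-invariance is not established.
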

\begin{proof}
Starting with a subgraph $\Sigma$ or $\Delta$ in either $\Omega$ or $\Gamma \s- \Omega$, we observe that the subgraph is a subgraph of $\Gamma$, and so using the relevant property we obtain a $G$-invariant subgraph $\Theta$. We now only need to observe that $\Theta \cap \Omega$ is $G$-invariant by \cref{lem: L}(4), and the image of $\Theta$ in $\Gamma \s- \Omega$ is invariant under the induced quotient action
\[ G \to \Out(A_{\Gamma \s- \Omega}) \qedhere \]
\end{proof}

\begin{thm}
\label{ass D}
Let us fix positive integers $n$ and $m\geqslant n$.
Suppose that a group $G$ satisfies all of the following:
\begin{enumerate}
 \item $G$ is the normal closure of a finite subgroup $H$.
 \item All homomorphisms
\[ G \to \Out(F_k) \]
are trivial when $k \neq n$ and $k < m$.
 \item All homomorphisms
\[ G \to \GL_k(\Z)\]
are trivial when $k<n$.
\item Any action of $G$ on a set of cardinality smaller than $m$ is trivial.
\end{enumerate}
Let \[\phi \colon G \to \Out(A_\Gamma)\] be a homomorphism, where $\Gamma$ has fewer than $m$ vertices.
Then $\phi$ is trivial, provided that the action satisfies properties $\mathfrak C$ and $\mathfrak D$ (with respect to $n$).
\end{thm}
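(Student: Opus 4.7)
The approach I have in mind is strong induction on the number of vertices of $\Gamma$, with the trivial base case $\lvert\Gamma\rvert = 0$. For the inductive step, the first move is to invoke hypothesis (4) together with \cref{out0 for homs} to force $\phi(G) \subseteq \Out^0(A_\Gamma)$, which makes the collection of $G$-invariant subgraphs supplied by \cref{lem: L} available.

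The crux of the argument is to exhibit a proper non-empty $G$-invariant subgraph $\Sigma \subsetneq \Gamma$. Once such $\Sigma$ is in hand, the induced action $G \to \Out(A_\Sigma)$ and the induced quotient action $G \to \Out(A_{\Gamma \smallsetminus \Sigma})$ both inherit properties $\mathfrak C$ and $\mathfrak D$ by \cref{C and D induced}, and each ambient graph has strictly fewer than $\lvert\Gamma\rvert < m$ vertices, so the inductive hypothesis trivialises both. To pass from this to $\phi$ itself being trivial I would run a Torelli argument: the abelianised representation $\psi \colon G \to \GL_{\lvert\Gamma\rvert}(\Z)$ then factors through the block-upper-triangular unipotent subgroup of $\GL_{\lvert\Gamma\rvert}(\Z)$ with identity diagonal blocks of sizes $\lvert\Sigma\rvert$ and $\lvert\Gamma\smallsetminus\Sigma\rvert$, which is a torsion-free abelian group. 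Since $G$ is the normal closure of the finite subgroup $H$, the abelianisation $G^{\mathrm{ab}}$ is finite, so $\psi$ has finite image inside a torsion-free group and is therefore trivial; hence $\phi(G)$ lies in the Torelli subgroup of $\Out(A_\Gamma)$, which is torsion-free by \cref{torelli}. Then $\phi(H)$ is a finite subgroup of a torsion-free group, so $H \subseteq \ker\phi$, and the normality of $\ker\phi$ combined with $G = \langle H^G \rangle$ concludes.

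What remains, and is where I expect the main difficulty, is the production of $\Sigma$. I would organise this by cases according to the structure of $\Gamma$: when $\Gamma$ is disconnected with a non-singleton component, that component is $G$-invariant by \cref{lem: L}(2); when $\Gamma$ is a discrete graph, condition (2) trivialises $\phi$ directly if $\lvert\Gamma\rvert \neq n$, while for $\lvert\Gamma\rvert = n$ property $\mathfrak D$ applied to $\Gamma$ itself supplies $\Sigma$; when $\Gamma$ is a clique, condition (3) trivialises $\phi$ directly if $\lvert\Gamma\rvert < n$, while for $\lvert\Gamma\rvert \geqslant n$ property $\mathfrak C$ supplies $\Sigma$. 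The hard case is connected, non-clique, non-discrete $\Gamma$, where I would build $\Sigma$ using the tools of \cref{lem: L}: from an extended star $\widehat{\st}(\Sigma_0)$ of a suitably chosen $\Sigma_0$, from the link of a non-cone subgraph (when it happens to be non-empty and proper), or, when $\Gamma$ is a non-trivial join, from the link of a non-cone join factor; if the invariant subgraph so produced itself turns out to be a clique of size at least $n$ or a discrete subgraph of size $n$, one more application of $\mathfrak C$ or $\mathfrak D$ to that subgraph breaks it further. The main obstacle will be ensuring that no structural configuration of $\Gamma$ lets every candidate from \cref{lem: L} collapse to $\emptyset$ or to all of $\Gamma$ at once --- this is precisely what $\mathfrak C$ and $\mathfrak D$ are designed to prevent, since the only configurations forcing such collapse are the critical cliques and discrete subgraphs they break apart.
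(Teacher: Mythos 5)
Your outline follows the paper's proof closely: induct on $\lvert\Gamma\rvert$, use hypothesis (4) with \cref{out0 for homs} to land in $\Out^0$, either find a proper non-empty $G$-invariant subgraph $\Sigma$ (then induct on $\Sigma$ and $\Gamma\smallsetminus\Sigma$ and finish with a Torelli argument), or fall into the clique/discrete terminal cases handled by $\mathfrak C$, $\mathfrak D$ together with hypotheses (2) and (3). Your Torelli step is a harmless rephrasing of the paper's \cref{killing H}: where the paper restricts to $H$ and notes that a finite-order element mapping to block-unipotent must already lie in Torelli, you observe that $G^{\mathrm{ab}}$ is finite (because $G$ is the normal closure of the finite $H$) and that $\psi$ lands in the torsion-free abelian group $\bigl\{\bigl(\begin{smallmatrix} I & \ast \\ 0 & I\end{smallmatrix}\bigr)\bigr\}$. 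Both are correct and equivalent in substance.

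The one place your write-up is not yet a proof is exactly where you flag it: you must show that a connected, non-clique, non-discrete $\Gamma$ always admits a proper non-empty $G$-invariant subgraph, and this does not follow from $\mathfrak C$ and $\mathfrak D$ --- those properties only bite when $\Gamma$ (or a subgraph) is already a large clique or a $D_n$. The paper closes this as follows: take $v$ with $\lk(v)\neq\emptyset$; then $\widehat\st(v)$ is $G$-invariant by \cref{lem: L}(3) and non-empty (it contains $v$), so either it is proper (done) or $\widehat\st(v)=\Gamma$, in which case $\Gamma=\lk(v)\ast(\Gamma\smallsetminus\lk(v))$ is a non-trivial join. Since $\Gamma$ is not a clique, the join decomposition has a factor $\Sigma_i$ that is not a singleton, hence not a cone, and \cref{lem: L}(1) makes $\lk(\Sigma_i)$ a proper non-empty invariant subgraph. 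That short chain is the whole content of the Case-1/Case-2 dichotomy; once it is inserted your proposal matches the paper's proof.
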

\begin{proof}
Formally, the proof is an induction on the number of vertices of $\Gamma$, and splits into two cases.

Before we proceed, let us observe that assumption (4) allows us to apply \cref{out0 for homs}, and hence to use \cref{lem: L} whenever we need to.

\medskip
\noindent \textbf{Case 1:} Suppose that $\Gamma$ does not admit proper non-empty $G$-invariant subgraphs.

Note that this is in particular the case when $\Gamma$ is a single vertex, which is the base case of our induction.

We claim that $\Gamma$ is either discrete, or a clique.
To prove the claim, let us suppose that $\Gamma$ is not discrete.

Let $v$ be a vertex of $\Gamma$ with a non-empty link. \cref{lem: L}(3) tells us that $\widehat \st(v)$ is $G$-invariant, and thus it must be equal to $\Gamma$. Hence $\Gamma$ is a join, and therefore admits a join decomposition.

If each factor of the decomposition is a singleton, then $\Gamma$ is a clique as claimed. Otherwise, the decomposition contains a factor $\Sigma$ which is not a singleton and not a join, and so in particular not a cone. Thus \cref{lem: L}(1) informs us that $\lk(\Sigma)$ is $G$-invariant. This is a contradiction, since this link is a proper non-empty subgraph.
We have thus shown the claim.

\smallskip

Suppose that $\Gamma$ is a clique, with, say, $k$ vertices.
Property $\mathfrak C$ immediately tells us that $k<n$, and so
we are dealing with a homomorphism
\[ \phi \colon G \to \Out(A_\Gamma) = \GL_k(\Z) \]
where $k<n$. Such a homomorphism is trivial by assumption (3).

\smallskip

Suppose that $\Gamma$ is a discrete graph, with, say, $k$ vertices.
Property $\mathfrak D$ immediately tells us that $k\neq n$, and so
we are dealing with a homomorphism
\[ \phi \colon G \to \Out(A_\Gamma) = \Out(F_k) \]
where $k\neq n$ and $k<m$. Such a homomorphism is trivial by assumption (2).

\medskip
\noindent \textbf{Case 2:} Suppose that $\Gamma$ admits a proper non-empty $G$-invariant subgraph $\Sigma$.

\cref{C and D induced} guarantees that the induced action
\[ G \to \Out(A_\Sigma) \]
satisfies the assumptions of our theorem, and thus, using the inductive hypothesis, we conclude that this induced action is trivial.

We argue in an identical manner for the induced quotient action
\[ G \to \Out(A_{\Gamma \s-\Sigma}) \]
and conclude that it is also trivial.

These two observations imply that in particular the restriction of these two actions to the finite group $H$ from assumption (1) is trivial. Now \cref{killing H} tells us that $H$ lies in the kernel of $\phi$, and hence so does $G$, as it is a normal closure of $H$ by assumption (1).
\end{proof}

\begin{lem}
 \label{replace ass C}
 Suppose that $\Gamma$ does not contain $n$ distinct vertices with identical stars. Then property $\mathfrak C$ holds for any action $G \to \Out^0(A_\Gamma)$.
\end{lem}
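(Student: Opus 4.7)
The plan is direct: given a $G$-invariant clique $\Sigma$ with $|\Sigma|\geqslant n$, I will locate a vertex $u$ outside $\Sigma$ that detects a star-difference among vertices of $\Sigma$, and then use $u$ together with a suitably chosen vertex of $\Sigma$ to form a non-cone pair whose link is $G$-invariant via \cref{lem: L}(1) and cuts $\Sigma$ properly.

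By hypothesis no $n$ vertices of $\Gamma$ share a common star, so among the $\geqslant n$ vertices of $\Sigma$ there must be two, call them $v$ and $w$, with $\st(v)\neq\st(w)$. Without loss of generality, pick $u\in\st(v)\setminus\st(w)$. A short case analysis---using that any two distinct vertices of the clique $\Sigma$ lie in each other's links---forces $u\in\lk(v)$, $u\neq w$ and $u\notin\lk(w)$; moreover $u\notin\Sigma$, since every element of $\Sigma\setminus\{w\}$ lies in $\lk(w)$.

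Since $u$ and $w$ are non-adjacent, the induced two-vertex subgraph $\{u,w\}$ is not a cone, so \cref{lem: L}(1) implies that
\[\Theta \;:=\; \lk(\{u,w\}) \;=\; \lk(u)\cap\lk(w)\]
is $G$-invariant. A vertex $x\in\Sigma$ lies in $\lk(w)$ iff $x\neq w$, so
\[\Theta\cap\Sigma \;=\; \{x\in\Sigma\setminus\{w\} : x\text{ is adjacent to }u\}.\]
This set contains $v$ (which is adjacent to $u$ and distinct from $w$) but omits $w$, so $\Theta\cap\Sigma$ is a proper non-empty subgraph of $\Sigma$, verifying $\mathfrak{C}$.

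I do not anticipate any real obstacle. The one observation to highlight is that, because $\Sigma$ is a clique, the stars of its vertices already agree on $\Sigma$ itself, so any discrepancy must be witnessed outside $\Sigma$; this is precisely what makes $u$ non-adjacent to $w$ and lets the pair $\{u,w\}$ be non-cone, at which point \cref{lem: L}(1) provides the invariant subgraph for free.
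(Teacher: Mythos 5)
Your proof is correct and follows exactly the paper's argument: find $v,w\in\Sigma$ with $\st(v)\neq\st(w)$, pick $u\in\st(v)\smallsetminus\st(w)$, note that $\{u,w\}$ is a non-adjacent (hence non-cone) pair, and invoke \cref{lem: L}(1) to get an invariant $\lk(\{u,w\})$ that contains $v$ but not $w$. The extra bookkeeping (that $u\neq v$, $u\neq w$, $u\notin\Sigma$) is harmless and just makes explicit what the paper leaves implicit.
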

\begin{proof}
 Let $\Sigma$ be a $G$-invariant clique in $\Gamma$ with at least $n$ vertices. Since we know that no $n$ vertices of $\Gamma$ have identical stars, we need to have distinct vertices of $\Sigma$, say $v$ and $w$, with $\st(v) \neq \st(w)$. Without loss of generality we may assume that there exists $u \in \st(v) \s- \st(w)$.
In particular this implies that $u$ and $w$ are not adjacent.

Consider $\Lambda= \lk(\{u,w\})$: it is invariant
by \cref{lem: L}(1), since $\{u,w\}$ is not a cone; it intersects $\Sigma$ non-trivially, since the intersection contains $v$; the intersection is also proper, since $w \not\in \Lambda$.
Thus property $\mathfrak C$ is satisfied.
\end{proof}

\begin{prop}
\label{replace ass D}
 In \cref{ass D}, we can replace the assumption on the action satisfying $\mathfrak D$ by the assumption that $\Gamma$ is not a join of $D_n$ and another (possibly empty) graph, provided that $G$ satisfies additionally
 \begin{enumerate}
  \item[(5)] $G$ contains a perfect subgroup $P$, which in turn contains $H$.
 \end{enumerate}
\end{prop}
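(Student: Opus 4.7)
The plan is to argue by induction on $|V(\Gamma)|$, closely paralleling the proof of \cref{ass D}, and to handle the one new obstruction (that the hypothesis ``not a join of $D_n$'' does not descend cleanly to invariant subgraphs or quotients) by verifying $\mathfrak D$ directly where needed. The base case and the ``atomic'' case (no proper nonempty $G$-invariant subgraph) go through exactly as in Case~1 of \cref{ass D}: in the clique subcase, $\mathfrak C$ plus assumption~(3) forces triviality, while the discrete subcase $\Gamma = D_k$ is dispatched by assumption~(2) when $k \ne n$, and excluded when $k = n$ since $D_n$ is the join of $D_n$ with the empty graph, violating the new hypothesis.

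For the inductive step, given a proper nonempty $G$-invariant $\Sigma$, the induced action on $\Sigma$ and the induced quotient action on $\Gamma \s- \Sigma$ both preserve $\mathfrak C$ (by the $\mathfrak C$-part of \cref{C and D induced}). When each of $\Sigma$ and $\Gamma \s- \Sigma$ still fails to be a join of $D_n$ with another graph, I apply the inductive hypothesis to both and conclude via \cref{killing H} applied to the finite subgroup $H$. When one of them is a join $D_n \ast X$, induction cannot be applied directly; instead I will verify property $\mathfrak D$ for the offending action and appeal to \cref{ass D}.

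The core $\mathfrak D$-verification runs as follows. Suppose toward contradiction that some invariant $\Delta \cong D_n$ admits no cutting invariant $\Theta$. For each $v \in \Delta$, the invariant $\widehat{\st}(v) = \lk(v) \cup \lk(\lk(v))$ of \cref{lem: L}(3) contains $v$, and therefore must contain all of $\Delta$. Because $\lk(v) \cap \Delta = \emptyset$ (the vertices of $\Delta$ are pairwise non-adjacent), this forces $\Delta \subseteq \lk(\lk(v))$; unpacking, $\lk(v) \subseteq \lk(w)$ for each $w \in \Delta$, and symmetry yields a common link $L$ for all vertices of $\Delta$. Thus $\Gamma$ contains $\Delta \ast L = D_n \ast L$ as an induced subgraph, and the new hypothesis forces $\Gamma \supsetneq \Delta \cup L$---there exists at least one vertex $u \in \Gamma$ not adjacent to any vertex of $\Delta$.

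The hard part will be closing the argument from here, because none of the invariants provided by \cref{lem: L} applied to subsets of $\Delta$, or constructed from $u$ alone, seem to cut $\Delta$: every natural candidate either contains all of $\Delta$ or is disjoint from it. This is precisely where assumption~(5) enters, since the perfect subgroup $P \supseteq H$ is exactly what is needed to invoke \cref{disconnected case}. In the subcase $L = \emptyset$, the vertices of $\Delta$ are isolated in $\Gamma$, and \cref{disconnected case} applies with discrete part $\Theta = \Delta$ of size $n$ and $l = n + 1$, using assumption~(2) for $k = n + 1 \ne n$; the remaining trivialization of the non-$\Delta$ components then follows by the inductive hypothesis. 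The subcase $L \ne \emptyset$ is the main technical obstacle: here one must first peel off $L$ by passing to the $G$-invariant subgraphs $\st(\Delta) = \Delta \cup L$ and $\widehat{\st}(\Delta) = L \cup \lk(L)$, reducing to a smaller instance where the $L = \emptyset$ argument applies, before finishing via \cref{disconnected case}.
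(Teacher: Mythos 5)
Your opening observation is genuinely nice and does not appear in the paper: by applying $\widehat\st(v)$ to vertices $v\in\Delta$ (rather than to vertices of $\Gamma\s-\Delta$ adjacent to $\Delta$, as the paper does), you deduce cleanly that if $\mathfrak D$ fails for $\Delta$, then all vertices of $\Delta$ must share a common link $L=\lk(\Delta)$. This is correct and slightly tidier as a setup than the paper's Case~1/Case~2 split (``some $u\in\Delta$ has nonempty link'' versus ``all links empty''). However, the proposal is not a proof: you explicitly leave the $L\neq\emptyset$ subcase unresolved, calling it ``the main technical obstacle,'' and the route you sketch for it does not work. Passing to $\st(\Delta)=\Delta\ast L$ does not help, because $\st(\Delta)$ is precisely a join of $D_n$ with another graph, so the inductive hypothesis cannot be applied to it; and passing to $\widehat\st(\Delta)=L\cup\lk(L)$ is also blocked whenever that subgraph equals $\Gamma$ or is itself a $D_n$-join. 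The paper's actual argument here is substantially more delicate: it first shows that in this situation $\st(\Delta)$ must be a connected component of $\Gamma$, that $\Gamma$ must have another component, and then splits on whether the clique factor of $\lk(\Delta)$ is trivial, in each branch trivialising $\Gamma$ by removing suitable invariant factors and applying \cref{killing H} to the induced quotient actions. None of that machinery is recovered by your sketch.

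The $L=\emptyset$ subcase is also looser than you present it. You write that \cref{disconnected case} ``applies with discrete part $\Theta=\Delta$,'' but the discrete part of $\Gamma$ in the sense required by \cref{disconnected case} is the set of \emph{all} isolated vertices, which may strictly contain $\Delta$; moreover, applying that lemma presupposes that the non-discrete components $\Gamma_i$ are already trivialised, which must first be established by induction (and itself requires checking the ``not a $D_n$-join'' hypothesis on each $\Gamma_i$, which can fail). The paper's Case~2 handles exactly these contingencies, including the subcase $k=1$ where $\Gamma_1$ has a $D_n$-factor and one needs to play the clique-factor-versus-no-clique-factor game again. In short: your reduction to a common link $L$ is a good idea worth keeping, but the argument as written stops at the hard part; closing it would require essentially reconstructing the paper's component/clique-factor case analysis, and the specific shortcut you propose (peeling off $L$ via $\st(\Delta)$) runs directly into the obstruction the hypothesis was designed to exclude.
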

\begin{proof}
We are going to proceed by induction on the number of vertices of $\Gamma$, as before.
Assuming the inductive hypothesis, we will either show the conclusion of the theorem directly, or we will show that in fact property $\mathfrak D$ holds.

Note that the base case of induction ($\Sigma$ being a singleton) always satisfies $\mathfrak D$.

Let $\Delta$ be as in property $\mathfrak D$, and suppose that the property fails for this subgraph.

\medskip \noindent \textbf{Case 1:} suppose that there exists a vertex $u$ of $\Delta$ with a non-empty link.

Let $v$ be a vertex of $\Gamma \s- \Delta$ joined to some vertex of $\Delta$.
Consider $\widehat \st(v)$; this subgraph is $G$-invariant by \cref{lem: L}(3). If $\widehat \st(v)$ intersects $\Delta$ and does not contain it, then $\Delta$ does satisfy property $\mathfrak D$. We may thus assume that $\Delta \subseteq \widehat \st(v)$.

 We would like to apply induction to $\widehat \st(v)$, and conclude that this subgraph, and hence $\Delta$, are trivialised. This would force $\Delta$ to satisfy property $\mathfrak D$.

There are two cases in which we cannot apply the inductive hypothesis to $\widehat \st(v)$: this subgraph might be equal to $\Gamma$, or it might be a join of a subgraph isomorphic to $D_n$ and another subgraph.

\smallskip
In the former case, $\Gamma$ is a join of two non-empty graphs. If there exists a factor $\Theta$ of the join decomposition of $\Gamma$ which is not a singleton, and which does not contain $\Delta$, then let us look at $\lk(\Theta)$. This is a proper subgraph of $\Gamma$, it is $G$-invariant by \cref{lem: L}(1), and is not a join of $D_n$ and another graph since $\Gamma$ is not. Thus we may apply the inductive hypothesis to $\lk(\Theta)$ and conclude that it is trivialised. But $\Delta \subseteq \lk(\Theta)$, and so $\Delta$ is also trivialised, and thus satisfies $\mathfrak D$.

If $\Gamma$ has no such factor $\Theta$ in its join decomposition, then $\Gamma = \st(\Sigma)$, where $\Sigma$ is a non-empty clique. The clique $\Sigma$ is a proper subgraph, since it does not contain $\Delta$. It is $G$-invariant by \cref{lem: L}(1)  and so the inductive hypothesis tells us that it is trivialised.

The induced quotient action $G \to \Out(A_{\Gamma \s- \Sigma})$ is also trivialised by induction, as $\Gamma \s- \Sigma$ cannot be a join of $D_n$ and another graph as before. We now apply \cref{killing H} for the subgroup $H$, and conclude that $H$, and hence its normal closure $G$, act trivially.

\smallskip
Now we need to look at the situation in which $\widehat \st(v)$ is a proper subgraph of $\Gamma$, but it is a join of $D_n$ and another graph.

Let us look at $\Lambda$, the intersection of $\widehat \st(v)$ with the link of all factors of the join decomposition of $\widehat \st(v)$ isomorphic to $D_n$. The subgraph $\Lambda$ is $G$-invariant by \cref{lem: L}(1) and (4). It is a proper subgraph of $\Gamma$, and so the inductive hypothesis tells us that $\Lambda$ is trivialised. If $\Lambda$ contains $\Delta$ then we are done.

The graph $\Lambda$ does not contain $\Delta$ \iff $\Delta$ is a factor of the join decomposition of $\widehat \st(v)$.
Observe that we can actually use another vertex of $\Gamma \s- \Delta$ in place of $v$, provided that this other vertex is joined by an edge to some vertex of $\Delta$. Thus we may assume that $\Delta$ is a factor of the join decomposition of every $\widehat \st(v)$ where $v$ is as described. This is however only possible when $\st(\Delta)$ is a connected component of $\Gamma$.
There must be at least one more component, since $\Gamma$ is not a join of $\Delta$ and another graph.

Note that the component $\st(\Delta)$ is invariant by \cref{lem: L}(5).

Suppose that the clique factor $\Sigma$ of $\lk(\Delta)$ is non-trivial. As before, $\Sigma$ is trivialised.
Observing that $\Gamma \s- \Sigma$ is disconnected, and if it is discrete then it is has more than $n$ vertices, allows us to apply the inductive hypothesis to the quotient action induced by $\Sigma$, and so, arguing as before, we see that $\Gamma$ is trivialised.

Now suppose that $\lk(\Delta)$ has a trivial clique component. The join decomposition of the component $\st(\Delta)$ consists of at least two factors, each of which is invariant by \cref{lem: L}(1).
Let $\Theta$ be such a factor. Removing $\Theta$ leaves us with a disconnected graph smaller than $\Gamma$. Thus, we may apply the inductive hypothesis, provided that $\Gamma \s- \Theta$ is not $D_n$. This might however occur: in this situation $\st(\Delta) \s- \Theta$ fulfils the role of the graph $\Theta$ from the definition of $\mathfrak D$, and so we can use the inductive hypothesis nevertheless.

We now apply \cref{killing H} to the subgroup $H$ and the induced quotient actions determined by removing two distinct factors of ${\st(\Delta)}$, and conclude that $H$, and hence its normal closure $G$, act trivially on $A_{\Gamma}$.

\medskip \noindent \textbf{Case 2:} $\lk(u) = \emptyset$ for every vertex $u$ of $\Delta$.

We write $\Gamma = \Gamma_1 \sqcup \dots \sqcup \Gamma_k \sqcup \Theta$ where the subgraphs $\Gamma_i$ are non-discrete connected components of $\Gamma$, and $\Theta$ is discrete. By assumption $\Delta \subseteq \Theta$.

If $k \geqslant 2$, then removing any component $\Gamma_i$ leaves us with a smaller graph, to which we can apply the inductive hypothesis. Then we use \cref{disconnected case}.

If $k=0$ then $\Theta$ is not isomorphic to $D_n$ by assumption. Then we know that the action $\phi$ is trivial by assumption (2).

If $k=1$, then we need to look more closely at $\Gamma_1$. If $\Gamma_1$ does not have factors isomorphic to $D_n$ in its join decomposition, then by induction we know that $\Gamma_1$ is trivialised. Now we use \cref{disconnected case}.

Suppose that $\Gamma_1$ contains a subgraph $\Omega$ isomorphic to $D_n$ in its join decomposition.
If $\Gamma_1$ has a non-trivial clique factor, then this factor is invariant, induction tells us that it is trivialised, and the induced quotient action is also trivial. Thus the entire action of $H$ is trivial, thanks to \cref{killing H}, and thus the action of $G$ is trivial, as $G$ is the normal closure of $H$.

If the clique factor is trivial, then taking links of different factors of the join decomposition of $\Gamma_1$ allows us to repeat the argument we just used, and conclude that $H$, and thus $G$, act trivially.
\end{proof}

\begin{thm}
\label{main thm}
Let $n \geqslant 6$. Suppose that $\Gamma$ is a simplicial graph with fewer than $\frac 1 2 \binom n 2$ vertices. Let $\phi \colon \SOut(F_n) \to \Out(A_\Gamma)$ be a homomorphism.
Then $\phi$ is trivial, provided that there are no $n$ vertices in $\Gamma$ with identical stars, and that $\Gamma$ is not a join of the discrete graph with $n$ vertices and another (possibly empty) graph.
\end{thm}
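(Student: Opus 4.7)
The plan is to derive the theorem as an almost immediate corollary of \cref{ass D}, combined with \cref{replace ass C} and \cref{replace ass D}. Concretely, I would set $G = \SOut(F_n)$, take $m = \lceil \tfrac{1}{2}\binom{n}{2}\rceil$, and choose the finite subgroup $H$ to be the cyclic group generated by any non-trivial element of $\Alt_n < \SOut(F_n)$. The bulk of the argument is then verifying the five hypotheses of \cref{replace ass D}.

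First I would check assumption (1): by \cref{prop: A_n in SOut}, $\SOut(F_n)$ is the normal closure of any non-trivial element of $\Alt_n$, so $G$ is the normal closure of our finite $H$. Assumption (2) is exactly \cref{thm: main phd for sout}, which says every homomorphism $\SOut(F_n) \to \Out(F_k)$ is trivial when $n \geqslant 6$, $k < \tfrac{1}{2}\binom{n}{2}$ and $k \neq n$. Assumption (3) is \cref{thm: maps from out to small gl} applied to $\SOut(F_n)$: any map to $\GL_k(\Z)$ with $k < n$ is trivial. Assumption (4) follows from \cref{cor: action on finite set}: every action of $\SOut(F_n)$ on a set of cardinality at most $\tfrac{1}{2}\binom{n+1}{2}$ is trivial, and any set of cardinality less than $m$ certainly has cardinality at most $\tfrac{1}{2}\binom{n+1}{2}$. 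Finally, assumption (5) holds with $P = \SOut(F_n)$ itself, which is perfect by \cref{prop: abelianise sout} and contains $H$ tautologically.

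To invoke \cref{replace ass C} I need $\phi$ to factor through $\Out^0(A_\Gamma)$. This is where assumption (4) does double duty: since $\Gamma$ has fewer than $m$ vertices and every action of $\SOut(F_n)$ on a set of that size is trivial, \cref{out0 for homs} applies and gives $\phi(G) \subseteq \Out^0(A_\Gamma)$. Then \cref{replace ass C}, together with the hypothesis that $\Gamma$ contains no $n$ vertices with identical stars, yields property $\mathfrak C$ (with respect to $n$).

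At this point, since $\Gamma$ is not a join of $D_n$ and another graph, \cref{replace ass D} permits us to drop property $\mathfrak D$ from the hypotheses of \cref{ass D}; all remaining assumptions have been verified, so \cref{ass D} applies and $\phi$ is trivial. There is no serious obstacle left: the real work has already been done in establishing \cref{thm: main phd for sout}, \cref{thm: action on finite set}, and the inductive scheme in \cref{ass D} and \cref{replace ass D}; the proof of the main theorem is essentially an assembly step where one checks that $\SOut(F_n)$, for $n \geqslant 6$, satisfies each of the abstract conditions that make that scheme run.
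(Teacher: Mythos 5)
Your proposal is correct and follows essentially the same route as the paper: set $G=\SOut(F_n)$, verify hypotheses (1)--(5) of \cref{ass D} and \cref{replace ass D} (via \cref{prop: A_n in SOut}, \cref{thm: main phd for sout}, \cref{thm: maps from out to small gl}, \cref{cor: action on finite set}, \cref{prop: abelianise sout}), replace $\mathfrak C$ and $\mathfrak D$ using \cref{replace ass C} and \cref{replace ass D}, and conclude. The only cosmetic differences are that you take $H$ to be cyclic rather than all of $\Alt_n$ (both work, since \cref{prop: A_n in SOut} gives the normal closure statement for any non-trivial element) and you take the ceiling of $\tfrac{1}{2}\binom n2$ for $m$, a slightly more careful choice that does not affect the argument.
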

\begin{proof}
We start by showing that $G = \SOut(F_n)$ satisfies the assumptions (1)--(4) of \cref{ass D} and (5) of \cref{replace ass D}, with $m = \frac 1 2 \binom n 2$.
\begin{enumerate}
 \item Let $H = \Alt_n$. The group $G$ is the normal closure of $H$ by \cref{prop: A_n in SOut}.

 \item All homomorphisms
 \[ G \to \Out(F_k)\]
 are trivial when $k \neq n$ and $k < m$ by \cref{thm: main phd for sout}.
 \item All homomorphisms
 \[ G \to \GL_k(\Z)\]
 are trivial when $k < n$ by \cref{thm: maps from out to small gl}.
 \item Any action of $G$ on a set of cardinality smaller than $m$ is trivial by \cref{cor: action on finite set}.
 \item $G$ is perfect by \cref{prop: abelianise sout}.
\end{enumerate}

To verify property $\mathfrak C$ we use \cref{replace ass C}, and property $\mathfrak D$ we replace using \cref{replace ass D}. Now we apply \cref{ass D}.
\end{proof}

\section{From larger to smaller RAAGs}

In this section we will look at homomorphisms $\Out(A_\Gamma) \to \Out(A_{\Gamma'})$, where $\Gamma'$ has fewer vertices than $\Gamma$.

\begin{thm}
\label{main fewer vertices}
 There are no injective homomorphisms $\Out(A_\Gamma) \to \Out(A_{\Gamma'})$ when $\Gamma'$ has fewer vertices than $\Gamma$.
\end{thm}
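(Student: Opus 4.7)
The plan is to compare the two sides by means of their maximal elementary abelian $2$-subgroups, which is exactly the invariant suggested in the introduction. Setting $n = |V(\Gamma)|$ and $n' = |V(\Gamma')|$, I aim to show that the $\Z/2\Z$-rank of $\Out(A_\Gamma)$ is at least $n$, while the $\Z/2\Z$-rank of $\Out(A_{\Gamma'})$ is at most $n'$. Since an injective homomorphism can only increase this rank, assuming $n' < n$ immediately yields a contradiction.

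For the lower bound, I would use the $n$ inversions $\iota_v$, one for each vertex $v$ of $\Gamma$: as automorphisms of $A_\Gamma$ they commute pairwise (they affect disjoint generators), and their images under the abelianisation map
\[ \Out(A_\Gamma) \longrightarrow \Out(H_1(A_\Gamma)) = \GL_n(\Z) \]
are the $n$ distinct diagonal sign matrices, which already generate a subgroup isomorphic to $(\Z/2\Z)^n$. Hence the $\iota_v$ generate such a subgroup in $\Out(A_\Gamma)$.

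For the upper bound, the key ingredient is the torsion-freeness of the Torelli subgroup (\cref{torelli}). This means the abelianisation map $\Out(A_{\Gamma'}) \to \GL_{n'}(\Z)$ is injective on every finite $2$-subgroup, so it suffices to bound the $\Z/2\Z$-rank of $\GL_{n'}(\Z)$. A finite elementary abelian $2$-subgroup of $\GL_{n'}(\Z) \subseteq \GL_{n'}(\C)$ consists of pairwise commuting involutions, and any such family is simultaneously diagonalisable over $\C$ with eigenvalues in $\{\pm 1\}$; therefore its order is at most $2^{n'}$.

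Combining these two bounds with the hypothetical injection $\Out(A_\Gamma) \hookrightarrow \Out(A_{\Gamma'})$ gives $n \leqslant n'$, contradicting the assumption that $\Gamma'$ has fewer vertices than $\Gamma$. I do not anticipate any genuine obstacle here; the only subtlety is invoking torsion-freeness of the Torelli group to transfer the question into $\GL_{n'}(\Z)$, where the diagonalisation argument settles matters at once.
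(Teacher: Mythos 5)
Your proposal is correct and is essentially identical to the paper's proof: the paper also computes the $\Z/2\Z$-rank of $\Out(A_\Gamma)$ to be $|V(\Gamma)|$, using the commuting inversions for the lower bound and the torsion-freeness of the Torelli subgroup (\cref{torelli}) together with simultaneous diagonalisation of commuting involutions in $\GL_n$ for the upper bound. The only cosmetic difference is that you diagonalise over $\C$ where the paper uses $\R$, which changes nothing.
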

\begin{proof}
For a group $G$ we define its $\Z_2$-\emph{rank} to be the largest $n$ such that ${(\Z_2)}^n$ embeds into $G$.

We claim that
the $\Z_2$-rank of $\Out(A_\Gamma)$ is equal to $\vert \Gamma \vert$, the number of vertices of $\Gamma$.

Firstly, note that
for every vertex of $\Gamma$ we have the corresponding inversion in $\Out(A_\Gamma)$, and these inversions commute; hence the $\Z_2$-rank of $\Out(A_\Gamma)$ is at least $\vert \Gamma \vert$.

For the upper bound, observe that the $\Z_2$-rank of $\GL_n(\R)$ is equal to $n$, since
we can simultaneously diagonalise commuting involutions in $\GL_n(\R)$. Thus, the $\Z_2$-rank of $\GL_n(\Z)$ is equal to $n$ as well (since it is easy to produce a subgroup of this rank).

Finally, note that the kernel of the natural map $\Out(A_\Gamma) \to \GL_n(\Z)$ with $n = \vert \Gamma \vert$ is torsion free by
\cref{torelli}, and so the $\Z_2$-rank of $\GL_n(\Z)$ is bounded below by the $\Z_2$-rank of $\Out(A_\Gamma)$.
\end{proof}

\begin{rmk}
 The proof of the above theorem works for many subgroups of $\Out(A_\Gamma)$ as well; specifically it applies to $\Out^0(A_\Gamma)$, the group of \emph{untwisted} outer automorphisms $\mathrm{U}(A_\Gamma)$, and the intersection $\mathrm{U}^0(A_\Gamma) = \mathrm{U}(A_\Gamma) \cap \Out^0(A_\Gamma)$.
 
It also works when the domain of the homomorphisms is $\Aut(A_\Gamma)$, or more generally any group with $\Z_2$-rank larger than the number of vertices of $\Gamma'$.
\end{rmk}

\bibliographystyle{math}
\bibliography{bibliography}

\bigskip

\noindent Dawid Kielak \newline
Fakult\"at f\"ur Mathematik  \newline
Universit\"at Bielefeld \newline
Postfach 100131  \newline
D-33501 Bielefeld \newline
Germany \newline
\texttt{dkielak@math.uni-bielefeld.de}

\end{document}